\documentclass[a4paper,intlimits,reqno,10pt,oneside]{amsart}
\hoffset -2cm
\textwidth 160mm \textheight 210mm

\usepackage[centertags]{amsmath}
\usepackage{amsfonts}
\usepackage{amssymb}
\usepackage{amsthm}
\usepackage{graphicx}

\newcommand{\Natural}{\mathbb N}

\newcommand{\Real}{\mathbb R}
\newcommand{\ereal}{(-\infty,+\infty]}

\newcommand{\Rnneg}{[0,+\infty)}
\newcommand{\abs}[1]{\left\vert#1\right\vert}
\newcommand{\set}[1]{\left\{#1\right\}}
\newcommand{\cardinality}[1]{\abs{#1}}
\newcommand{\too}[1]{\stackrel{#1}{\to}}
\newcommand{\nullset}{{\mathcal N}}
\newcommand{\domain}{{\mathcal D}}
\newcommand{\kindex}{r}

\newcommand{\dens}{{\rm dens}}

\newcommand{\norm}[1]{\left\Vert#1\right\Vert}
\newcommand{\newnorm}[1]{\left\Vert|#1|\right\Vert}
\newcommand{\finitetop}[1]{\left\lceil#1\right\rceil}

\newcommand{\closedball}[1]{B_{#1}}
\newcommand{\openball}[1]{B^O_{#1}}

\newcommand{\Banach}{X}

\newcommand{\PRI}{\set{P_\alpha}_{\omega\leq\alpha\leq \mu}}
\newcommand{\linf}{\ell^\infty}
\newcommand{\smooth}{$C^\infty$-smooth}
\newcommand{\ksmooth}[1]{$C^{#1}$-smooth}
\newcommand{\COne}{$C^1$-smooth}
\newcommand{\ointerval}{\Lambda}

\theoremstyle{plain}
\newtheorem{thm}{Theorem}[section]
\newtheorem{cor}[thm]{Corollary}
\newtheorem{lem}[thm]{Lemma}

\theoremstyle{definition}
\newtheorem{defn}[thm]{Definition}

\begin{document}
\title{$C^k$-smooth approximations of LUR norms}
\author{Petr H\'ajek}
\address{Mathematical Institute\\Czech Academy of Science\\\v Zitn\'a 25\\115 67 Praha 1\\Czech Republic}
\email{hajek@math.cas.cz}
\author{Anton\'\i n Proch\'azka}
\address{Charles University\\Sokolovsk\'a 83\\186 75 Praha 8\\Czech Republic
and  Universit\'e Bordeaux 1, 351 cours de
la liberation, 33405, Talence, France.} \email{protony@karlin.mff.cuni.cz}

\date{January 2009}
\thanks{Supported by grants: Institutional Research Plan AV0Z10190503, A100190502, GA \v CR 201/07/0394}
\keywords{LUR, higher order differentiability, renorming} \subjclass[2000]{46B20, 46B03, 46E15.}

\begin{abstract}
Let $X$ be a WCG Banach space admitting a $C^k$-Fr\' echet smooth norm. Then $X$ admits an equivalent norm which is simultaneously
$C^1$-Fr\' echet smooth, LUR, and a uniform limit of $C^k$-Fr\' echet smooth norms. If $X=C([0,\alpha])$, where $\alpha$ is an ordinal, then
the same conclusion holds true with $k=\infty$.
\end{abstract}

\maketitle

\section{Introduction}
The theory of $C^k$-Fr\' echet smooth approximations of continuous functions on Banach spaces is well-developed,
thanks to the work of many mathematicians, whose classical results and references can be found
in the authoritative monograph \cite{DGZ}. The known techniques
rely on the use of $C^k$-Fr\' echet smooth partititons of unity, resp. certain coordinatewise smooth embeddings into
the space $c_0(\Gamma)$ (due to Torunczyk \cite{To}). They are highly nonlinear, and even non-Lipschitz in nature.
For example, if the given function is Lipschitz or has some uniform continuity, trying to preserve the lipschitzness of the approximating smooth functions
leads to considerable additional technical difficulties (e.g. \cite{F1}, \cite{F2}, \cite{HJ1}, \cite{HJ2}, \cite{J}, \cite{JTZ}).

It is well-known that the (apparently harder, and less developed) parallel theory of approximations of norms on a
Banach space by $C^k$-Fr\' echet smooth renormings requires different techniques.

Several open problems proposed in \cite{DGZ} are addressing these issues.  In particular, if a Banach space
admits an equivalent $C^k$-Fr\' echet smooth renorming,
 is it possible to approximate (uniformly on bounded sets) all norms by $C^k$-Fr\' echet smooth
norms? Even in the separable case, the answer is not known in full generality, although the positive results in
\cite{DFH1} and \cite{DFH2} are quite strong, and apply to most classical Banach spaces.
In the nonseparable setting, no general
results are known, with a small exception of \cite{FHZ}. In particular, one of the open problems in \cite{DGZ}
is whether on a given  WCG Banach space with an equivalent $C^k$-Fr\' echet smooth norm, there exists an equivalent
locally uniformly rotund (LUR) norm which is a uniform
limit on bounded sets of $C^k$-Fr\' echet smooth norms.
The notion of LUR is of fundamental importance for renorming theory, and we refer to \cite{DGZ} and the more
recent \cite{MOTV} for an extensive list of authors and results.

Such a result is of interest for several reasons. It can be used to
obtain rather directly the uniform approximations of general continuous operators, by $C^k$-Fr\' echet smooth ones.
Moreover, since LUR norms form a residual set in the metric space of all equivalent norms on a Banach space,
a positive answer is to be expected. There is a closely related problem of obtaining a norm which shares simultaneously
good rotundity and smoothness properties. By a famous result of Asplund \cite{A},
on every separable Asplund space there exists
an equivalent norm which is simultaneously $C^1$-Fr\' echet smooth and LUR. A clever proof using Baire category,
and disposing of the separability condition on the underlying Banach space,
was devised in \cite{FZZ} (\cite{DGZ}, II.4.3). The theorem holds in particular in all WCG Asplund spaces (in particular all
reflexive spaces). Its proof works under the assumption that the space admits an LUR norm, as well as a norm whose dual
is LUR. It is well-known that dual LUR implies that the original norm is $C^1$-Fr\' echet smooth.
However, using this approach one cannot in general handle norms with higher degree of differentiability, even in the separable case.
 Indeed, by \cite{FWZ}
(\cite{DGZ}, Proposititon V.1.3), a space admitting a LUR and simultaneously $C^2$-Fr\' echet smooth norm is
superreflexive. There is not even a rotund and $C^2$-Fr\' echet smooth norm on $c_0(\Gamma)$ (\cite{H1}, \cite{H2}).
In fact, one cannot even handle the proper case of LUR and $C^1$-Fr\' echet smooth norms. Indeed, Talagrand
\cite{T} proved that $C([0,\omega_1])$ admits an equivalent $C^\infty$-Fr\' echet smooth norm, although it admits no
dual LUR renorming. The existence of LUR renorming of this space follows from Troyanski's theorem \cite{Tr}.
In light of the previous results it is natural to ask whether this space has a $C^1$-Fr\' echet smooth and
simultaneously LUR renorming. This question was posed on various occasions, e.g. in \cite{FMZ}.

Our main result addresses both of the above mentioned open problems, namely higher smoothness approximation and simultaneous
LUR and \COne ness. Under reasonable assumptions (e.g. for WLD, $C(K)$ where $K$ is Valdivia compact,
or $C([0,\alpha])$, i.e. the space of continuous functions on the ordinal interval $[0,\alpha]$), it gives a renorming which is
simultaneously $C^1$-Fr\' echet smooth and LUR, and admits a uniform approximation on bounded sets by
$C^k$-Fr\' echet norms.
As a corollary we obtain a positive solution to both of the mentioned problems.
We should emphasize that it is unknown whether $C^1$-Fr\' echet smooth norms are residual, or even dense, in the space
of all equivalent norms on $C([0,\alpha])$.

The paper is organized as follows. In Section~\ref{s:Preliminaries}, we introduce our notation and we present some auxiliary
lemmata. We include the easy proofs for reader's convenience. The main result, its corollaries and the frame of the proof of
 the main result are gathered in Section~\ref{s:MR}. Sections~\ref{s:AboutN} and~\ref{s:AboutJ} then contain the
details of the construction.

\section{Preliminaries}\label{s:Preliminaries}
The closed unit ball of a Banach space $(X,\norm{\cdot})$ is denoted by $\closedball{(X,\norm{\cdot})}$, or $\closedball{X}$
for short. Similarly, the open unit ball of $X$ is $\openball{(X,\norm{\cdot})}=\openball{X}$. By $\Gamma$
we denote an index set. Smoothness and higher smoothness is meant in the Fr\' echet sense.
\begin{defn}\label{d:LDF}
Let $A \subset \linf(\Gamma)$.
We say that a function $f:\linf(\Gamma) \to \Real$ in $A$ \emph{locally depends on finitely many coordinates} (LFC) if for each $x \in A$ there exists a neighborhood $U$ of $x$, a finite $M=\set{\gamma_1,\ldots,\gamma_n} \subset \Gamma$ and a function $g:\Real^{\abs{M}} \to \Real$ such that $f(y)=g(y(\gamma_1),\ldots,y(\gamma_n))$ for each $y \in U$.
\end{defn}

\begin{defn}
Let $X$ be a vector space. A function $g:X \to \linf(\Gamma)$ is said to be \emph{coordinatewise convex} if, for each $\gamma \in \Gamma$, the function $x \mapsto g_\gamma(x)$ is convex.
We use the terms as \emph{coordinatewise non-negative} or \emph{coordinatewise \ksmooth{k}} in a similar way.
\end{defn}

\begin{lem}\label{l:composedLFC}
Let $\Banach$ be a Banach space and let $h:\Banach \to \linf(\Gamma)$ be a continuous function which is coordinatewise \ksmooth{k}, $k \in \Natural \cup \set{\infty}$. Let $f:\linf(\Gamma) \to \Real$ be a \ksmooth{k} function which locally depends on finitely many coordinates. Then $f\circ h$ is \ksmooth{k}.
\end{lem}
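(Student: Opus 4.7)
The plan is to prove smoothness locally around an arbitrary point. Fix $x_0 \in \Banach$ and apply the LFC hypothesis to $f$ at $h(x_0) \in \linf(\Gamma)$, obtaining a neighborhood $U$ of $h(x_0)$, a finite set $M = \set{\gamma_1,\ldots,\gamma_n} \subset \Gamma$, and a function $g:\Real^n \to \Real$ such that $f(y) = g(y(\gamma_1),\ldots,y(\gamma_n))$ for every $y \in U$.

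The central preliminary step is to verify that $g$ itself is \ksmooth{k} on a suitable open subset of $\Real^n$, which is not immediate from the LFC definition alone since $g$ is only pinned down on the projection of $U$. To see this, let $P:\linf(\Gamma) \to \Real^n$ be the continuous linear projection $P(y) = (y(\gamma_1),\ldots,y(\gamma_n))$ and $J:\Real^n \to \linf(\Gamma)$ the linear lift $J(t_1,\ldots,t_n) = \sum_{i=1}^n t_i e_{\gamma_i}$. Given $t_0 \in P(U)$ and any $y_0 \in U$ with $P(y_0) = t_0$, openness of $U$ and continuity of $J$ ensure $y_0 + J(t - t_0) \in U$ for all $t$ in some neighborhood of $t_0$, and the LFC identity yields
\[
g(t) \;=\; f\bigl(y_0 + J(t - t_0)\bigr)
\]
on that neighborhood. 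The right-hand side is \ksmooth{k} in $t$ as the composition of the \ksmooth{k} function $f$ with a smooth affine map $\Real^n \to \linf(\Gamma)$.

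With $g$ known to be \ksmooth{k} near $P(h(x_0))$, the rest is a routine finite-dimensional chain rule. The preimage $V := h^{-1}(U)$ is open in $\Banach$ by continuity of $h$ and contains $x_0$. Each coordinate $h_{\gamma_i}:\Banach \to \Real$ is \ksmooth{k} by the coordinatewise smoothness hypothesis, so the tuple $\Phi := (h_{\gamma_1},\ldots,h_{\gamma_n}):\Banach \to \Real^n$ is \ksmooth{k}. On $V$ we have $(f\circ h)(x) = g(\Phi(x))$, which is \ksmooth{k} as a composition of a \ksmooth{k} map into $\Real^n$ with a \ksmooth{k} real-valued function. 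Since $x_0$ was arbitrary, $f\circ h$ is \ksmooth{k} on $\Banach$.

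The only point requiring care is the smoothness of the local representative $g$; once that is extracted by the lift-and-compose trick above, everything else reduces to the classical chain rule, for which the passage through $\Real^n$ is essential (so that we never have to differentiate $h$ as a map into the infinite-dimensional space $\linf(\Gamma)$, where it is only assumed coordinatewise smooth and not jointly smooth).
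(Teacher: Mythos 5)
Your proof follows the same route as the paper: use the LFC hypothesis to reduce $f\circ h$ locally to $g\circ\Phi$, where $\Phi=(h_{\gamma_1},\ldots,h_{\gamma_n})$ is \ksmooth{k} by the coordinatewise hypothesis, and then invoke the chain rule through $\Real^n$. The only difference is that you spell out, via the lift $y_0+J(t-t_0)$, why the local representative $g$ is itself \ksmooth{k} near $P(h(x_0))$ — a step the paper asserts without comment — and that justification is correct.
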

\begin{proof}
Let $x\in X$ be fixed. Since $f$ is LFC, there is a neighborhood $U$ of $h(x)$, $M=\set{\gamma_1,\ldots,\gamma_n}\subset \Gamma$ and $g:\Real^{\abs{M}} \to \Real$ as in Definition~\ref{d:LDF}. The function $g$ is \ksmooth{k}, because $f$ is \ksmooth{k}. As $h$ is continuous, there exists a neighborhood $V$ of $x$ such that $h(V) \subset U$. Since $h$ is coordinatewise \ksmooth{k}, it follows that $h(\cdot)\upharpoonright_{ M}:=(h(\cdot)(\gamma_1),\ldots,h(\cdot)(\gamma_n))$ is \ksmooth{k} from $X$ to $\Real^{\abs{M}}$. Finally, we have for each $y \in V$ that
$f( h(y))=g (h(y)\upharpoonright_{M})$ and the claim follows.
\end{proof}

\begin{lem}\label{l:implicitLFC}
Let $\Phi:\linf(\Gamma) \to \Real$ and let $x \in \linf(\Gamma)$ be such that
\begin{list}{}{}
\item{a)} $\Phi$ is LFC at $x$,
\item{b)} $\Phi'(x)x \neq 0$,
\item{c)} $\Phi(\cdot)$ and $\Phi'(\cdot)$ are continuous at $x$.
\end{list}
Then there is a neighborhood $U$ of $x$ and a unique function $F:U \to \Real$ which is continuous at $x$ and satisfies $F(x)=1$ and $\Phi(\frac{y}{F(y)})=1$ for all $y \in U$. Moreover $F$ is LFC at $x$.
\end{lem}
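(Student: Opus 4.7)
The plan is to invoke a one-dimensional implicit function argument in the scalar parameter $t$. Set $G(y,t):=\Phi(y/t)$ for $y$ near $x$ and $t$ close to $1$. The stated conclusion, evaluated at $y=x$, tacitly requires $\Phi(x)=1$; with this, $G(x,1)=1$ and a direct computation using (b) gives $\partial_t G(x,1)=-\Phi'(x)x\neq 0$. Without loss of generality assume $\Phi'(x)x>0$, so $\partial_t G(x,1)<0$.

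Next I would run an elementary implicit function argument. By (c), $\Phi$, and hence $G$, is continuous on a neighbourhood of $(x,1)$, and $\partial_t G$ is continuous at $(x,1)$. Choose $\delta,\varepsilon>0$ so that $\partial_t G(y,t)<0$ whenever $\norm{y-x}<\delta$ and $\abs{t-1}<\varepsilon$. For each such $y$, the map $t\mapsto G(y,t)$ is strictly decreasing on $(1-\varepsilon,1+\varepsilon)$; since $G(x,1-\varepsilon/2)>1>G(x,1+\varepsilon/2)$, joint continuity extends the two strict inequalities to all $y$ in a smaller ball $U=B(x,\delta')$, and the intermediate value theorem produces a unique $F(y)\in(1-\varepsilon/2,1+\varepsilon/2)$ with $\Phi(y/F(y))=1$. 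Continuity of $F$ at $x$ follows by rerunning the argument with $\varepsilon$ arbitrarily small, and uniqueness of $F$ among functions continuous at $x$ with $F(x)=1$ follows because any such competitor must eventually take values in the interval of uniqueness.

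Finally I would derive the LFC property of $F$ from the uniqueness above. Let $V$, $M=\set{\gamma_1,\dots,\gamma_n}$ and $g:\Real^n\to\Real$ witness that $\Phi$ is LFC at $x$, so $\Phi(z)=g(z(\gamma_1),\dots,z(\gamma_n))$ for $z\in V$. After further shrinking $U$ and $\varepsilon$ we may assume $y/t\in V$ whenever $y\in U$ and $\abs{t-1}<\varepsilon/2$; then the defining equation for $F(y)$ reads $g(y(\gamma_1)/F(y),\dots,y(\gamma_n)/F(y))=1$, whose data involves $y$ only through the coordinates in $M$. If $y_1,y_2\in U$ agree on $M$, the two equations coincide and the uniqueness from the previous paragraph forces $F(y_1)=F(y_2)$; this factorisation $F(y)=\tilde F(y(\gamma_1),\dots,y(\gamma_n))$ is exactly the LFC conclusion. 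The main obstacle I anticipate is being sparing with the hypotheses: only pointwise continuity at $x$ is available for $\Phi$ and $\Phi'$, so a Banach-space implicit function theorem requiring $C^1$-ness on an open set cannot be invoked verbatim, and the elementary monotonicity-plus-\emph{IVT} argument above is tailored to exactly what (c) supplies.
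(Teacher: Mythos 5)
Your proof is correct, and the logical skeleton matches the paper's: both establish existence, uniqueness and continuity of $F$ via an implicit-function argument, and then leverage the uniqueness together with the LFC witness $(V,M,g)$ for $\Phi$ to deduce that $F$ is LFC. The paper's proof differs in two small ways: it cites the Banach-space Implicit Function Theorem as a black box for the first step, whereas you give the elementary monotonicity-plus-IVT argument tailored to the weak hypothesis (c); and for the LFC step the paper applies the finite-dimensional IFT a second time to the reduced equation $g(z/h(z))=1$ to produce a candidate $H(y)=h(y\upharpoonright_M)$ and then identifies $F=H$ by uniqueness, whereas you skip the second IFT entirely and observe directly that the defining equation for $F(y)$ depends on $y$ only through $y\upharpoonright_M$, so uniqueness forces the factorisation $F(y)=\tilde F(y\upharpoonright_M)$. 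Your version is slightly leaner. Your remark that the conclusion tacitly requires $\Phi(x)=1$ is also a correct reading of an unstated hypothesis.
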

\begin{proof}
The first part of the assertion follows immediately from the Implicit Function Theorem. We will show that $F$ is LFC at $x$. From the assumption a) we know that there is a neighborhood $V$ of $x$, $M=\set{\gamma_1,\ldots,\gamma_n} \subset \Gamma$, and $g:\Real^n \to \Real$ such that $\Phi(y)=g(y\upharpoonright_{M})$ for all $y \in V$. It is obvious that $g'(x\upharpoonright_{M})x\upharpoonright_{M}=\Phi'(x)x$ so it is possible to apply the Implicit Function Theorem to the equation $g\left(\frac{y}{h(y)}\right)=1$ to get $h:V' \to \Real$, where $V'$ is a neighborhood of $x\upharpoonright_{M}$, such that $h(x\upharpoonright_{M})=1$ and  $h$ is continuous at $x\upharpoonright_{M}$.
There is a neighborhood $U'\subset U \cap V$ of $x$ such that we may define $H:U' \to \Real$ by $H(y):=h(y\upharpoonright_{M})$ for $y \in U'$. Then $H(x)=1$ and $H$ is continuous at $x$. Also, $\Phi\left(\frac{y}{H(y)}\right)=g\left(\frac{y\upharpoonright_{M}}{h(y\upharpoonright_{M})}\right)=1$. The uniqueness of $F$ implies that $F=H$ on $U'$ so $F$ is LFC at $x$.
\end{proof}

The following lemma is a variant of Fact II.2.3(i) in~\cite{DGZ}.
\begin{lem}\label{l:equivLUR}
Let $\varphi:\Banach \to \Real$ be a convex non-negative function, $x_\kindex ,x \in \Banach$ for $\kindex \in \Natural$. Then the following conditions are equivalent:
\begin{list}{}{}
\item{\rm (i)} $\frac{\varphi^2(x_\kindex)+\varphi^2(x)}{2}-\varphi^2(\frac{x+x_\kindex}{2}) \to 0$,
\item{\rm (ii)} $\lim \varphi(x_\kindex)=\lim \varphi(\frac{x+x_\kindex}{2})=\varphi(x)$.
\end{list}
If $\varphi$ is homogeneous, the above conditions are also equivalent to
\begin{list}{}{}
\item{\rm (iii)} $2\varphi^2(x_\kindex)+2\varphi^2(x)-\varphi^2(x+x_\kindex) \to 0$.
\end{list}
\end{lem}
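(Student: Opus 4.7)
The plan is to prove the cycle (ii) $\Rightarrow$ (i) $\Rightarrow$ (ii), and then to observe that (i) $\Leftrightarrow$ (iii) under homogeneity by a direct algebraic manipulation.

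First, (ii) $\Rightarrow$ (i) is immediate: if the three sequences $\varphi(x_\kindex)$, $\varphi(\frac{x+x_\kindex}{2})$, $\varphi(x)$ all converge to the same limit $\varphi(x)$, then squaring (which is continuous) and taking the obvious linear combination gives the expression in (i) tending to $0$. No use of convexity or non-negativity is needed here.

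The main direction is (i) $\Rightarrow$ (ii). The key observation is the elementary identity
\[
\frac{a^{2}+b^{2}}{2}-\left(\frac{a+b}{2}\right)^{2}=\frac{(a-b)^{2}}{4}
\]
applied to $a=\varphi(x_\kindex)$ and $b=\varphi(x)$. By convexity of $\varphi$ we have $\varphi\bigl(\tfrac{x+x_\kindex}{2}\bigr)\leq \tfrac{\varphi(x_\kindex)+\varphi(x)}{2}$, and since $\varphi\geq 0$ we may square this inequality to obtain $\varphi^{2}\bigl(\tfrac{x+x_\kindex}{2}\bigr)\leq\bigl(\tfrac{\varphi(x_\kindex)+\varphi(x)}{2}\bigr)^{2}$. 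Substituting yields
\[
\frac{\varphi^{2}(x_\kindex)+\varphi^{2}(x)}{2}-\varphi^{2}\!\left(\frac{x+x_\kindex}{2}\right)\;\geq\;\frac{(\varphi(x_\kindex)-\varphi(x))^{2}}{4}\geq 0.
\]
Hence (i) forces $\varphi(x_\kindex)\to\varphi(x)$. Plugging this back into (i) gives $\varphi^{2}\bigl(\tfrac{x+x_\kindex}{2}\bigr)\to\varphi^{2}(x)$, and since $\varphi$ is non-negative, $\varphi\bigl(\tfrac{x+x_\kindex}{2}\bigr)\to\varphi(x)$, which is (ii).

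Finally, under the homogeneity assumption, $\varphi^{2}(x+x_\kindex)=4\varphi^{2}\bigl(\tfrac{x+x_\kindex}{2}\bigr)$, so
\[
2\varphi^{2}(x_\kindex)+2\varphi^{2}(x)-\varphi^{2}(x+x_\kindex)=4\left(\frac{\varphi^{2}(x_\kindex)+\varphi^{2}(x)}{2}-\varphi^{2}\!\left(\frac{x+x_\kindex}{2}\right)\right),
\]
so (iii) and (i) are literally the same condition up to a factor of $4$. There is no real obstacle here — the only non-formal step is the squaring of the convexity inequality, which requires precisely the hypothesis $\varphi\geq 0$, and the extraction of the perfect-square lower bound.
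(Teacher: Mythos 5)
Your proof is correct and follows essentially the same route as the paper: the key step in both is squaring the convexity inequality $\varphi(\tfrac{x+x_\kindex}{2})\leq\tfrac{\varphi(x_\kindex)+\varphi(x)}{2}$ (valid by non-negativity) to extract the lower bound $\bigl(\tfrac{\varphi(x_\kindex)-\varphi(x)}{2}\bigr)^2$, then feeding $\varphi(x_\kindex)\to\varphi(x)$ back into (i). You spell out the second half of (ii) and the homogeneity rescaling a bit more explicitly than the paper does, but that is only a difference of exposition, not of method.
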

\begin{proof}
Since $\varphi$ is convex and non-negative, and $y \mapsto y^2$ is increasing for $y \in [0,+\infty)$, it holds
\[
\frac{\varphi^2(x_\kindex)+\varphi^2(x)}{2}-\varphi^2\left(\frac{x+x_\kindex}{2}\right) \geq
\frac{\varphi^2(x_\kindex)+\varphi^2(x)}{2}-\left(\frac{\varphi(x)+\varphi(x_\kindex)}{2}\right)^2= \left(\frac{\varphi(x)-\varphi(x_\kindex)}{2}\right)^2
\]
which proves (i) $\Rightarrow$ (ii). The implication (ii) $\Rightarrow$ (i) is trivial and so is the equivalence (i) $\Leftrightarrow$ (iii).
\end{proof}

\begin{lem}\label{l:sufficientConvexity}
Let $f, g$ be twice differentiable, convex, non-negative, real functions of one real variable.
Let $F:\Real^2 \to \Real$ be given as $F(x,y):=f(x)g(y)$. For $F$ to be convex in $\Real^2$, it is sufficient that $g$ is convex and
\begin{equation}\label{e:convexityCondition}
(f'(x))^2(g'(y))^2\leq f''(x)f(x)g''(y)g(y).
\end{equation}
for all $(x,y) \in \Real^2$.
\end{lem}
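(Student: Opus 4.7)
The plan is direct: compute the Hessian of $F$ and verify that it is positive semidefinite at every point, which characterizes convexity of a $C^2$ function on $\Real^2$.

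First I would compute the partial derivatives:
\[
F_x(x,y)=f'(x)g(y),\quad F_y(x,y)=f(x)g'(y),
\]
\[
F_{xx}(x,y)=f''(x)g(y),\quad F_{yy}(x,y)=f(x)g''(y),\quad F_{xy}(x,y)=f'(x)g'(y).
\]
So the Hessian is
\[
H(x,y)=\begin{pmatrix} f''(x)g(y) & f'(x)g'(y) \\ f'(x)g'(y) & f(x)g''(y)\end{pmatrix}.
\]

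Next I would invoke the criterion that a symmetric $2\times 2$ matrix is positive semidefinite if and only if both diagonal entries are non-negative and the determinant is non-negative. For the diagonal entries: $F_{xx}\geq 0$ because $f$ is convex (so $f''\geq 0$) and $g$ is non-negative; $F_{yy}\geq 0$ because $g$ is convex (so $g''\geq 0$) and $f$ is non-negative. For the determinant,
\[
\det H(x,y)=f''(x)f(x)\,g''(y)g(y)-\bigl(f'(x)g'(y)\bigr)^2,
\]
which is $\geq 0$ precisely by the hypothesis~\eqref{e:convexityCondition}. Thus $H(x,y)$ is positive semidefinite at every $(x,y)\in\Real^2$, and so $F$ is convex on $\Real^2$.

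There is no real obstacle here; the lemma is essentially a Hessian computation combined with Sylvester-type criterion for positive semidefiniteness. The only small point to bear in mind is that we need the non-strict version of the criterion (which holds because the non-negativity of the leading principal minors of orders $1$ and $2$ is equivalent to positive semidefiniteness in the $2\times 2$ symmetric case). The role of~\eqref{e:convexityCondition} is exactly to make the off-diagonal cross term $f'g'$ controlled by the product of the two diagonal entries, thereby ensuring non-negativity of $\det H$.
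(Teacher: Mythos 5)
Your proof is correct and takes essentially the same route as the paper: the paper restricts $F$ to lines through a fixed point and requires the resulting one-variable second derivative, a quadratic $f(x)g''(y)a^2+2f'(x)g'(y)a+f''(x)g(y)$ in the slope $a$, to have non-positive discriminant, which is exactly the statement that your Hessian is positive semidefinite. One small caution on the closing parenthetical: non-negativity of the \emph{leading} principal minors of orders $1$ and $2$ does \emph{not} in general characterize positive semidefiniteness of a symmetric $2\times 2$ matrix (e.g. $\bigl(\begin{smallmatrix}0&0\\0&-1\end{smallmatrix}\bigr)$ has both leading minors equal to $0$ yet is not PSD); the correct criterion, which your argument actually uses, is that \emph{both} diagonal entries and the determinant be non-negative, i.e.\ all principal minors, so the proof stands.
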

\begin{proof}
Let $(x,y) \in \Real^2$ be fixed. Since $g$ is convex, the function $F$ is convex when restricted to the vertical line
going through $(x,y)$. Let $s=at+b$ ($a,b \in \Real$) be a line going through $(x,y)$, i.e. $y=ax+b$.
The second derivative at a point $(x,y)$ of $F$ restricted to this line is given as:
\[
f(x)g''(y)a^2+2f'(x)g'(y)a+f''(x)g(y).
\]
In order for the second derivative to be non-negative for all $a \in \Real$, it is sufficient that the discriminant
$(2f'(x)g'(y))^2-4f(x)g''(y)f''(x)g(y)$ of the above quadratic term be non-positive, which occurs exactly when our condition \eqref{e:convexityCondition} holds for $(x,y)$.
\end{proof}

\begin{defn}
We say that a function $f: \linf(\Gamma) \to \Real$ is \emph{strongly lattice} if $f(x)\leq f(y)$ whenever $\abs{x(\gamma)} \leq \abs{y(\gamma)}$ for all $\gamma \in \Gamma$.
\end{defn}

\begin{lem}\label{l:SLconvexComposition}
Let $f:\linf(\Gamma) \to \Real$ be convex and strongly lattice. Let $g:X \to \linf(\Gamma)$ be coordinatewise convex and coordinatewise non-negative. Then $f \circ g:X \to \Real$ is convex.
\end{lem}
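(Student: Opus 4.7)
The plan is to verify the convexity inequality directly, using coordinatewise convexity of $g$ to produce a pointwise domination in $\linf(\Gamma)$, then passing through the strongly lattice property of $f$, and finishing with the ordinary convexity of $f$.

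More precisely, I would fix $x,y \in X$ and $\lambda \in [0,1]$ and set $z := \lambda x + (1-\lambda) y$. Applied coordinatewise, the hypothesis on $g$ gives, for every $\gamma \in \Gamma$,
\[
0 \leq g_\gamma(z) \leq \lambda g_\gamma(x) + (1-\lambda) g_\gamma(y),
\]
since $g_\gamma$ is convex and non-negative. In particular both $g(z)$ and the element $\lambda g(x) + (1-\lambda)g(y)$ of $\linf(\Gamma)$ are non-negative in each coordinate, so the above chain reads $\abs{g_\gamma(z)} \leq \abs{\lambda g_\gamma(x) + (1-\lambda)g_\gamma(y)}$ for every $\gamma$.

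By the strongly lattice property applied to these two elements of $\linf(\Gamma)$, one obtains
\[
f(g(z)) \leq f\bigl(\lambda g(x) + (1-\lambda) g(y)\bigr),
\]
and then the convexity of $f$ gives
\[
f\bigl(\lambda g(x) + (1-\lambda) g(y)\bigr) \leq \lambda f(g(x)) + (1-\lambda) f(g(y)).
\]
Chaining these two inequalities yields the required convexity of $f \circ g$. There is no real obstacle here; the only subtlety worth flagging explicitly is that coordinatewise non-negativity of $g$ is precisely what allows the convex combination $\lambda g_\gamma(x)+(1-\lambda)g_\gamma(y)$ to dominate $g_\gamma(z)$ in absolute value (and not merely in order), which is what the strongly lattice hypothesis requires.
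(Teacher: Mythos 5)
Your proof is correct and follows exactly the same route as the paper: coordinatewise convexity and non-negativity of $g$ give the pointwise domination, then the strongly lattice property and convexity of $f$ are applied in succession. The only difference is cosmetic — you spell out the absolute-value step needed to invoke the strongly lattice hypothesis, which the paper leaves implicit.
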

\begin{proof}
Let $a,b \geq 0$ and $a+b=1$.
Since $g$ is coordinatewise convex and non-negative, we have
\[
0\leq g_\gamma(ax+by)\leq a g_\gamma(x)+ b g_\gamma(y)
\]
for each $\gamma \in \Gamma$.
The strongly lattice property and the convexity of $f$ yield
\[
f(g(ax+by)) \leq f(a g(x)+ b g(y)) \leq a f(g(x))+ b f(g(y))
\]
so $f \circ g$ is convex.
\end{proof}

\begin{defn}
Let us define $\finitetop{\cdot}:\linf(\Gamma) \to \Real$ by $\finitetop{x}=\inf\set{t; \set{\gamma;\abs{x(\gamma)}>t} \mbox{ is finite}}$.
Then $\finitetop{\cdot}$ is 1-Lipschitz, strongly lattice seminorm on $(\linf(\Gamma),\norm{\cdot}_\infty)$.
\end{defn}
\begin{proof}
In fact $\finitetop{x}=\norm{q(x)}_{\linf/c_0}$, where $q: \linf(\Gamma) \to  \linf(\Gamma)/c_0(\Gamma)$ is the quotient map  and $\norm{\cdot}_{\linf/c_0}$ the canonical norm on the quotient $\linf(\Gamma)/c_0(\Gamma)$. Clearly, $\finitetop{x}=0$ if and only if $x \in c_0(\Gamma)$. Let us assume that $\finitetop{x}=t>0$. Then, for every $0<s<t$, there are infinitely many $\gamma \in \Gamma$ such that $\abs{x(\gamma)}>s$. It follows that $\norm{x-y}_\infty>s$ for every $y \in c_0(\Gamma)$ and consequently $\norm{q(x)}_{\linf/c_0} \geq t$. On the other hand, we may define $y \in c_0(\Gamma)$ as
\[
y(\gamma):=\begin{cases}
x(\gamma)-t \mbox{ if } x(\gamma)>t,\\
x(\gamma)+t \mbox{ if } x(\gamma)<-t,\\
0 \mbox{ otherwise.}
\end{cases}
\]
Obviously $\norm{x-y}_\infty \leq t$, so $\norm{q(x)}_{\linf/c_0} \leq t$. The strongly lattice property of $\finitetop{\cdot}$ follows directly from the definition.
\end{proof}

\begin{defn} Let $(X,\norm{\cdot})$ be a Banach space and let $\mu$ be the smallest ordinal such that $\cardinality{\mu}=\dens(X)$.
A system $\PRI$ of projections from $X$ into $X$ is called a \emph{projectional resolution of identity} (PRI) provided that, for every $\alpha \in [\omega,\mu]$, the following conditions hold true
\item{(a)} $\norm{P_\alpha}=1$,
\item{(b)} $P_\alpha P_\beta= P_\beta P_\alpha = P_\alpha$ for $\omega \leq \alpha \leq \beta \leq \mu$,
\item{(c)} $\dens(P_\alpha X) \leq \cardinality{\alpha}$,
\item{(d)} $\bigcup\set{P_{\beta+1} X: \beta < \alpha}$ is norm-dense in $P_\alpha X$,
\item{(e)} $P_\mu = id_X$.
\end{defn}

If $\PRI$ is a PRI on a Banach space $X$, we use the following notation:
 $\ointerval:=\set{0} \cup [\omega,\mu)$, $Q_\gamma:=P_{\gamma+1}-P_\gamma$ for all $\gamma \in [\omega,\mu)$ while $Q_0:=P_\omega$, and $P_A:=\sum_{\gamma \in A} Q_\gamma$ for any finite subset $A$ of $\ointerval$.
\begin{lem}\label{l:PRI}
Let $\Banach$ be a Banach space with a PRI $\PRI$. Then for each $x \in X$, $\varepsilon>0$, $\alpha \in [\omega,\mu]$ there is a finite set $A_\varepsilon^\alpha(x)\subset \ointerval$ such that
\[
\norm{P_{A_\varepsilon^\alpha(x)}x-P_\alpha x}<\varepsilon.
\]
We may choose $A=A_\varepsilon^\alpha(x)$ in such a way that $Q_\beta x \neq 0$ for $\beta \in A$ since $P_A=\sum_{\gamma \in A} Q_\gamma$.
\end{lem}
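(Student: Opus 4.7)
The plan is to proceed by transfinite induction on $\alpha \in [\omega,\mu]$, using the density axiom (d) at the limit steps and the telescoping nature of the $Q_\gamma$'s at the successor steps. Once a finite set $A$ with $\norm{P_A x - P_\alpha x} < \varepsilon$ is produced, indices $\beta \in A$ with $Q_\beta x = 0$ can be discarded without changing $P_A x$, which takes care of the final sentence of the statement.

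The base case is $\alpha = \omega$. Then $P_\alpha x = P_\omega x = Q_0 x$, so $A = \{0\}$ (or $A = \emptyset$ if $Q_0 x = 0$) gives $\norm{P_A x - P_\alpha x}=0$.

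For the successor step $\alpha = \beta + 1$ with $\beta \geq \omega$, the inductive hypothesis applied to $\beta$ and $\varepsilon$ yields a finite $A' \subset \ointerval$ with $\norm{P_{A'} x - P_\beta x} < \varepsilon$; since $P_\alpha x = P_\beta x + Q_\beta x$, setting $A := A' \cup \{\beta\}$ gives $P_A x = P_{A'} x + Q_\beta x$ and the same estimate transfers. For the limit step $\alpha > \omega$, axiom (d) provides $\beta < \alpha$ (hence $\beta + 1 \leq \alpha$) and a vector $y \in P_{\beta+1} X$ with $\norm{P_\alpha x - y} < \varepsilon/3$. Applying $P_{\beta+1}$, whose norm is $1$ and which satisfies $P_{\beta+1} P_\alpha = P_{\beta+1}$ by (b), and using $P_{\beta+1} y = y$, I get
\[
\norm{P_{\beta+1} x - y} = \norm{P_{\beta+1}(P_\alpha x - y)} \leq \norm{P_\alpha x - y} < \varepsilon/3,
\]
so $\norm{P_{\beta+1} x - P_\alpha x} < 2\varepsilon/3$ by the triangle inequality. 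The inductive hypothesis at $\beta+1 < \alpha$ with tolerance $\varepsilon/3$ produces a finite $A \subset \ointerval$ with $\norm{P_A x - P_{\beta+1} x} < \varepsilon/3$, and the triangle inequality finishes this case.

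The only genuine subtlety is the limit case, where one has to combine the abstract density statement in (d) with the idempotency relation (b) to pass from approximation by an arbitrary element $y \in P_{\beta+1} X$ to approximation by $P_{\beta+1} x$ itself; everything else is bookkeeping. After this, the reduction $A \rightsquigarrow \{\beta \in A : Q_\beta x \neq 0\}$ is immediate from $P_A = \sum_{\gamma \in A} Q_\gamma$.
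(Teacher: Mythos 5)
Your proof is correct and follows the same overall transfinite induction scheme as the paper. The one place you diverge is the limit step: the paper simply invokes the standard fact that $\gamma \mapsto P_\gamma x$ is norm-continuous at limit ordinals (citing~\cite[Lemma~VI.1.2]{DGZ}) to find $\beta<\alpha$ with $\norm{P_\beta x - P_\alpha x}<\varepsilon/2$, whereas you rederive the needed estimate from the PRI axioms themselves -- using the density axiom (d) to get $y\in P_{\beta+1}X$ close to $P_\alpha x$, then applying $P_{\beta+1}$ together with (a) ($\norm{P_{\beta+1}}=1$) and (b) ($P_{\beta+1}P_\alpha = P_{\beta+1}$) to pass from $y$ to $P_{\beta+1}x$. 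That buys you a self-contained argument at the cost of a paragraph; the paper's version is shorter because it leans on the cited lemma, which is of course proved by exactly this kind of reasoning. Both proofs implicitly use that the sets produced by the induction contain only indices strictly below $\alpha$, which guarantees $\beta\notin A'$ in the successor step and keeps $P_{A'\cup\{\beta\}} = P_{A'} + Q_\beta$ valid; this is routine, but worth being conscious of.
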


\begin{proof}
We will proceed by a transfinite induction on $\alpha$. If $\alpha=\omega$, then $A^\omega_\varepsilon(x):=\set{0}$ for any $\varepsilon>0$. If $\alpha=\beta+1$ for some ordinal $\beta$, then $A^\alpha_\varepsilon(x):=A^\beta_\varepsilon(x) \cup \set{\beta}$ for all $\varepsilon>0$. Finally, if $\alpha$ is a limit ordinal, we will use the continuity of the mapping $\gamma \mapsto P_\gamma x$ at $\alpha$ \cite[Lemma VI.1.2]{DGZ} to find $\beta<\alpha$ such that
$\norm{P_\beta x- P_\alpha x}<\varepsilon/2$. Thus it is possible to set $A^\alpha_\varepsilon(x):=A^\beta_{\varepsilon/2}(x)$.
\end{proof}

\section{Main Result}\label{s:MR}
Let us recall that a norm $\norm{\cdot}$ in a Banach space $X$ is \emph{locally uniformly rotund} (LUR) if  $\lim_\kindex \norm{x_\kindex-x}=0$ whenever $\lim_\kindex\left(2\norm{x_\kindex}^2+2\norm{x}^2-\norm{x_\kindex + x}^2\right)=0$. 
\begin{thm}\label{t:main}
Let $k\in \Natural \cup \set{\infty}$. Let $(X,\abs{\cdot})$ be a Banach space with a PRI $\PRI$ such that
each $Q_\gamma X$ admits a \COne, LUR equivalent norm which is a limit (uniform on bounded sets) of \ksmooth{k} norms. Let $X$ admit an equivalent \ksmooth{k} norm $\norm{\cdot}$.

Then $X$ admits an equivalent \COne, LUR norm $\newnorm{\cdot}$ which is a limit (uniform on bounded sets) of \ksmooth{k} norms.
\end{thm}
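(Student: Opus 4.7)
The plan is to construct $\newnorm{\cdot}$ as the Minkowski functional of a level set $\set{F=1}$ for a convex function $F = N\circ J\colon X \to \Real$, where $J\colon X \to \linf(\Gamma')$ is coordinatewise convex, coordinatewise non-negative, and coordinatewise \ksmooth{k}, and where $N\colon \linf(\Gamma') \to \Real$ is convex, strongly lattice, locally dependent on finitely many coordinates, and $C^1$-smooth. Lemma~\ref{l:SLconvexComposition} then makes $F$ convex, Lemma~\ref{l:composedLFC} gives $C^1$-smoothness (or \ksmooth{k} when $N$ is so), and Lemma~\ref{l:implicitLFC} turns the level set $\set{F=1}$ into a genuine LFC, $C^1$-smooth norm $\newnorm{\cdot}$.

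For the coordinates of $J$, fix for each $\gamma \in \ointerval$ a \COne, LUR norm $p_\gamma$ on $Q_\gamma X$ together with a sequence $p_{\gamma,n}$ of \ksmooth{k} norms approximating $p_\gamma$ uniformly on bounded sets. Index $\Gamma'$ by finite subsets $A \subset \ointerval$ and additional discrete parameters, and take the $A$-coordinate of $J(x)$ to be a convex, non-negative, \ksmooth{k} combination of $\sum_{\gamma \in A} p_\gamma^2(Q_\gamma x)$ and $\abs{x - P_A x}^2$, rescaled to lie in $\linf(\Gamma')$. Lemma~\ref{l:PRI} guarantees that for every $x$ and every $\varepsilon > 0$ some finite $A$ makes the tail $\abs{x - P_A x}$ arbitrarily small, ensuring that every element is ``reached'' by sufficiently many coordinates of $J$.

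To verify LUR I invoke Lemma~\ref{l:equivLUR} applied to $\varphi = \newnorm{\cdot}$: assume $2\newnorm{x_\kindex}^2 + 2\newnorm{x}^2 - \newnorm{x_\kindex + x}^2 \to 0$. Strong latticeness and convexity of $N$, together with its local LFC representation near $J(x)$ and Lemma~\ref{l:SLconvexComposition}, propagate this into coordinatewise midpoint convergence of $J(x_\kindex)$ to $J(x)$ along every coordinate active at $J(x)$. Applying Lemma~\ref{l:equivLUR} coordinatewise then yields $p_\gamma(Q_\gamma(x_\kindex - x)) \to 0$ for every active $\gamma$ (using LUR of $p_\gamma$) together with $\abs{P_A x_\kindex - P_A x} \to 0$ from the tail term; selecting $A$ arbitrarily large via Lemma~\ref{l:PRI} upgrades this to $\abs{x_\kindex - x} \to 0$. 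The $C^1$-smoothness of $\newnorm{\cdot}$ is inherited from that of each $p_\gamma$ and of $N$ via Lemmas~\ref{l:composedLFC} and~\ref{l:implicitLFC}.

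The approximating \ksmooth{k} norms $\newnorm{\cdot}_n$ are obtained by substituting $p_{\gamma,n}$ for $p_\gamma$ (and, if necessary, replacing $N$ by a \ksmooth{k} variant) in the construction of $F$; Lemma~\ref{l:composedLFC} combined with LFC-ness of $N$ yields $C^k$-smoothness, and uniform closeness of $p_\gamma$ and $p_{\gamma,n}$ on bounded sets transfers to a uniform-on-bounded-sets estimate for $\newnorm{\cdot} - \newnorm{\cdot}_n$ via a Lipschitz estimate for the implicit function of Lemma~\ref{l:implicitLFC}. The central technical obstacle is the simultaneous design of $N$ and $J$: $N$ must be strongly lattice and LFC (so that Lemmas~\ref{l:SLconvexComposition} and~\ref{l:composedLFC} apply and so that the $C^k$-approximation step goes through), $C^1$-smooth (so that the resulting norm is \COne), and ``coordinatewise faithful'' in the sense that $\newnorm{\cdot}$-convergence forces convergence along every coordinate of $J$ used in a local LFC representation. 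The ceiling seminorm $\finitetop{\cdot}$, which measures the $\linf/c_0$ norm, is the natural device to control the infinitely many ``asymptotic'' coordinates in an LFC way, and the detailed construction of $N$ and $J$ occupies Sections~\ref{s:AboutN} and~\ref{s:AboutJ}.
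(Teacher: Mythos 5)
Your high-level framework is the right one and matches the paper's machinery: compose a coordinatewise-convex, coordinatewise-smooth map into $\linf$ with a strongly lattice, LFC, $C^1$-smooth functional on $\linf$, invoke Lemmas~\ref{l:SLconvexComposition}, \ref{l:composedLFC}, \ref{l:implicitLFC} for convexity and smoothness, use $\finitetop{\cdot}$ to control the asymptotic coordinates, and verify LUR via Lemma~\ref{l:equivLUR}. But the proposal omits the two pieces that constitute the actual proof, and one of them is a genuine gap rather than a postponed detail.

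First, a structural point. The paper does not build a single composed Minkowski functional $N\circ J$; it sets $\newnorm{x}^2 = N(x)^2 + J(x)^2 + \norm{x}^2$ with $N$ and $J$ built to do \emph{different} jobs: $N$ is built from the Troyanski-type seminorms $\zeta_n(y)=\sup_{\abs{M}=n}(\sum_{\gamma\in M}y(\gamma)^2)^{1/2}$ and gives (Lemma~\ref{l:blockconvergence}) that the LUR hypothesis forces $\norm{P_A x_\kindex - P_A x}\to 0$ on every finite block $A$ with $Q_\gamma x\neq 0$; $J$ then gives (Lemma~\ref{l:finalHit}) that the tail $\norm{P_A x_\kindex - x_\kindex}$ becomes small. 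Crucially, the proof of the tail estimate for $J$ \emph{uses} the block estimate from $N$ (the end of Lemma~\ref{l:finalHit} invokes Lemma~\ref{l:blockconvergence} before applying property (A6)). Your unified $J$, whose $A$-coordinate mixes block data $\sum_{\gamma\in A}p_\gamma^2(Q_\gamma x)$ and tail data $\abs{x-P_Ax}^2$ in a single convex combination, would have to deliver both conclusions simultaneously from a single coordinate, and it is not clear this can work: the Troyanski argument that identifies the ``winning'' coordinate for $N$ is what you later need in order to extract tail convergence from $J$.

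Second, and this is the real gap: the step ``Strong latticeness and convexity of $N$, together with its local LFC representation near $J(x)$ \ldots propagate this into coordinatewise midpoint convergence of $J(x_\kindex)$ to $J(x)$ along every coordinate active at $J(x)$'' does not follow from the stated properties. Strong latticeness plus convexity give you one half of the required squeeze (as in the chain of inequalities in Lemma~\ref{l:finalHit}), but they do not tell you \emph{which} coordinate convergence occurs at, nor that it occurs at any coordinate at all. The entire content of the paper's Sections~\ref{s:AboutN} and~\ref{s:AboutJ} is to force this: the carefully calibrated system $g_{n,m,l}$ with properties (A1)--(A7), the functions $\Phi_\eta, Z_\eta$, and Lemmas~\ref{l:PWZ}, \ref{l:TroyanskiElement} are precisely what guarantee that the sup defining $Z_{\eta_{n,m}}(H_{n,m}x)$ is achieved near the ``correct'' coordinate $(C_{n,m},A)$, that the LUR hypothesis forces $x_\kindex$ to realize its near-max at the \emph{same} coordinate, and that this yields pointwise convergence of that coordinate. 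You are implicitly assuming the conclusion of a Troyanski-type lemma without constructing the machinery that makes it true. Until you either reproduce an analogue of (A1)--(A7) and Lemmas~\ref{l:PWZ}--\ref{l:TroyanskiElement} or give a different argument forcing coordinate-identification, the proposal is an outline of the framework, not a proof.

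Two smaller remarks. (i) You must be careful that $N$ in your $F=N\circ J$ really is strongly lattice and LFC on the \emph{range} of $J$; the paper ensures this by restricting attention to the open sets $A_\eta(\Gamma)$ in Lemma~\ref{l:FetaDef} and proving in Lemma~\ref{l:systemH} that $H_{n,m}x$ lands there. (ii) Your approximation step (replacing $p_\gamma$ by $p_{\gamma,n}$) is consistent with what the paper does, but the uniform-on-bounded-sets estimate for the Minkowski functional requires the quantitative inequality \eqref{e:MinkEstimate}, not just Lipschitzness of the implicit function; the paper's argument at the end of Section~\ref{s:AboutJ} spells this out.
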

Our first corollary provides a positive solution of Problem 8.2 (c) in~\cite{FMZ}.
\begin{cor}\label{c:Calpha}
Let $\alpha$ be an ordinal. Then the space $C([0,\alpha])$ admits an equivalent norm which is \COne, LUR and a limit of \smooth\ norms.
\end{cor}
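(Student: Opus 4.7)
The plan is to apply Theorem~\ref{t:main} directly to $X = C([0,\alpha])$ with $k = \infty$. Two hypotheses must be verified: that $X$ admits an equivalent $C^\infty$-smooth norm, and that $X$ possesses a PRI whose components $Q_\beta X$ each admit a $C^1$-smooth LUR norm which is a uniform limit on bounded sets of $C^\infty$-smooth norms. Everything else in the corollary is then delivered by the main theorem.

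The existence of an equivalent $C^\infty$-smooth norm on $C([0,\alpha])$ is classical: it is due to Talagrand for $\alpha = \omega_1$ and to Haydon in general. For the PRI, I would use the standard truncation construction. Writing $\mu$ for the smallest ordinal of cardinality $\dens(X)$, and after identifying a suitable strictly increasing and cofinal family of ordinals $(\sigma_\beta)_{\omega \leq \beta \leq \mu}$ in $[0,\alpha]$, set $(P_\beta f)(\gamma) = f(\gamma)$ for $\gamma \leq \sigma_\beta$ and $(P_\beta f)(\gamma) = f(\sigma_\beta)$ for $\gamma > \sigma_\beta$. That $P_\beta f$ is continuous at $\sigma_\beta$ follows from continuity of $f$, and the remaining conditions (a)--(e) in the PRI definition are routine: $\norm{P_\beta} = 1$ by pointwise bounds, $P_\beta P_{\beta'} = P_{\min(\beta,\beta')}$, the range $P_\beta X$ is isometric to $C([0,\sigma_\beta])$ of density at most $\cardinality{\beta}$, the density hypothesis (d) follows from continuity of $\beta \mapsto P_\beta f$ at limits, and $P_\mu = \operatorname{id}$. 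A direct computation yields $Q_\beta f = (f(\sigma_{\beta+1}) - f(\sigma_\beta))\,\chi_{(\sigma_\beta,\alpha]}$, so each $Q_\beta X$ is one-dimensional. Any one-dimensional space trivially admits a $C^\infty$-smooth (off zero) LUR norm which is its own uniform limit of $C^\infty$-smooth norms, so the hypothesis of Theorem~\ref{t:main} is satisfied and we conclude.

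The main obstacle is the separable case, in which $\alpha$ is countable and hence $\mu = \omega$, making the PRI trivial and the application of Theorem~\ref{t:main} tautological. Here $C([0,\alpha])$ is separable and polyhedral, and one appeals to the separable approximation theory of \cite{DFH1,DFH2}: a separable Banach space admitting both a $C^k$-smooth norm and a LUR norm admits an equivalent norm which is simultaneously $C^1$-smooth, LUR, and a uniform limit of $C^k$-smooth norms. This handles the countable regime and, combined with the preceding paragraph, yields the corollary for all ordinals $\alpha$.
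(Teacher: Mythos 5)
Your approach is the paper's: apply Theorem~\ref{t:main} with the natural truncation PRI on $C([0,\alpha])$, citing Talagrand and Haydon for the ambient $C^\infty$-smooth norm. You correctly flag the separable case as requiring a non-tautological base step. But there is one point in your non-separable paragraph that is not quite right, and it affects the bookkeeping. You claim that \emph{each} $Q_\beta X$ is one-dimensional. That is only true for $\beta\in[\omega,\mu)$; by the paper's convention $Q_0:=P_\omega$, and $Q_0X=P_\omega X\cong C([0,\omega_0])\cong c$, which is infinite-dimensional and separable. So the hypothesis of Theorem~\ref{t:main} on $Q_0X$ is exactly a separable instance of the conclusion, and the separable theory is needed not only when $\alpha$ is countable but for the $Q_0$-block in every case. (The paper's own terse proof of this corollary has the same lacuna --- it only mentions $(P_{\gamma+1}-P_\gamma)X$ for $\gamma\geq\omega_0$ and is silent about $P_\omega X$; it is filled by the transfinite-induction framework of the following theorem.) Two smaller remarks: the re-indexing through a cofinal family $(\sigma_\beta)$ is harmless but unnecessary, since nothing in the proof of Theorem~\ref{t:main} actually uses minimality of the top index $\mu$; and the reference you want for the separable base case is not \cite{DFH1,DFH2} (which concern $C^k$ and polyhedral \emph{approximation} without the simultaneous $C^1$-plus-LUR conclusion), but rather McLaughlin--Poliquin--Vanderwerff--Zizler \cite{MPVZ} or \cite[Theorem~V.1.7]{DGZ}, precisely as the paper cites in the proof of the $\mathcal P$-class theorem that follows.
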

\begin{proof}[Proof of Corollary~\ref{c:Calpha}]
By a result of Talagrand \cite{T} and Haydon \cite{Hay}, $C([0,\alpha])$ admits an equivalent \smooth\ norm. On the other hand, the natural PRI on $C([0,\alpha])$ defined as
\[
(P_\gamma x)(\beta)=
\begin{cases}
x(\beta) \mbox{ if } \beta \leq \gamma,\\
x(\gamma) \mbox{ if } \beta \geq \gamma
\end{cases}
\]
has $(P_{\gamma+1}-P_\gamma)X$ one-dimensional for each $\gamma \in [\omega_0,\alpha)$.
\end{proof}

\begin{thm}
Let $k\in \Natural \cup \set{\infty}$. Let $\mathcal P$ be a class of Banach spaces such that every $X$ in  $\mathcal P$
\begin{itemize}
\item admits a PRI $\PRI$ such that $(P_{\alpha+1}-P_\alpha)X \in \mathcal P$,
\item admits a \ksmooth{k} equivalent norm.
\end{itemize}

Then each $X$ in $\mathcal P$ admits an
equivalent, LUR, \COne\ norm  which is a limit (uniform on bounded sets) of \ksmooth{k} norms.
\end{thm}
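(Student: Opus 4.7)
The plan is to derive this theorem from Theorem~\ref{t:main} by transfinite induction on $\dens(X)$ for $X \in \mathcal P$. The key preliminary observation is that if $\mu$ is the least ordinal with $\cardinality{\mu}=\dens(X)$, then $\mu$ is a cardinal, hence a limit ordinal, so $\cardinality{\gamma+1}<\cardinality{\mu}$ for every $\gamma \in [\omega,\mu)$; combined with property (c) of the PRI this gives $\dens(Q_\gamma X)=\dens((P_{\gamma+1}-P_\gamma)X)\le \cardinality{\gamma+1} < \dens(X)$. The remaining summand $Q_0 X = P_\omega X$ is automatically separable. Consequently, for every non-separable $X\in\mathcal P$, every summand of its PRI again lies in $\mathcal P$ and has strictly smaller density than $X$.

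The inductive step is then essentially a single invocation of Theorem~\ref{t:main}. Assuming the conclusion of the present theorem has already been established for every member of $\mathcal P$ of density strictly below $\dens(X)$, each summand $Q_\gamma X$ admits an equivalent \COne\ LUR norm which is a uniform limit on bounded sets of \ksmooth{k} norms by the inductive hypothesis. Since $X$ itself admits an equivalent \ksmooth{k} norm by the second bullet of the hypothesis on $\mathcal P$, all assumptions of Theorem~\ref{t:main} are in force, and that theorem produces the required renorming on $X$.

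The principal obstacle is the base of the induction: the separable members of $\mathcal P$. For such $X$ the PRI degenerates to $P_\omega=\mathrm{id}_X$ and $Q_0 X = X$, so Theorem~\ref{t:main} becomes tautological and gives no information. I would handle this case by appealing to the separable approximation theorems of Deville-Fonf-H\'ajek \cite{DFH1} and \cite{DFH2}, which, for a separable Banach space admitting a \ksmooth{k} norm, already produce an equivalent \COne\ LUR norm that is a uniform limit of \ksmooth{k} norms; this suffices for the classical classes $\mathcal P$ that the corollaries of the paper are aimed at. A minor bookkeeping issue worth checking is that the hypothesis of the theorem formally asserts only that $(P_{\alpha+1}-P_\alpha)X\in\mathcal P$ for $\alpha\in[\omega,\mu)$, whereas Theorem~\ref{t:main} also demands a suitable norm on the summand $Q_0 X = P_\omega X$; since $P_\omega X$ is always separable, it can be disposed of by the same base-case argument rather than by the induction.
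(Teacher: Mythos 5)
Your overall strategy --- transfinite induction on density, with the inductive step reducing to Theorem~\ref{t:main} via the observation that $\dens(Q_\gamma X)<\dens(X)$ for every $\gamma$ once $X$ is nonseparable --- is exactly the paper's, and your remark that the summand $Q_0 X = P_\omega X$ is automatically separable and can be disposed of by the base case rather than the induction is a correct reading of a point the paper leaves implicit.

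The genuine gap is the separable base case itself. You invoke Deville--Fonf--H\'ajek \cite{DFH1}, \cite{DFH2}, but those papers address a different and stronger problem (uniform approximation of \emph{every} equivalent norm by \ksmooth{k}\ norms), and, as the introduction of the present paper explicitly says, they do so only under additional structural hypotheses: they ``apply to most classical Banach spaces,'' not to all separable ones. Your own hedge --- that this ``suffices for the classical classes $\mathcal P$ that the corollaries of the paper are aimed at'' --- concedes precisely this, but the theorem is stated for an arbitrary class $\mathcal P$ satisfying the two bullet points, so the argument as written does not establish it. What the base case actually needs is only the weaker statement that a separable Banach space with an equivalent \ksmooth{k}\ norm admits \emph{one} equivalent \COne, LUR norm that is a uniform limit on bounded sets of \ksmooth{k}\ norms, and this is known unconditionally: it is the theorem of McLaughlin--Poliquin--Vanderwerff--Zizler \cite{MPVZ}, equivalently \cite[Theorem V.1.7]{DGZ}. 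Replacing the DFH citation with this closes the gap and makes your proof coincide with the paper's.
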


\begin{proof}
We will carry out induction on the density of $X$. Let $X \in \mathcal P$ be separable, i.e. $\dens(X)=\omega$. Then we get the result from the theorem of McLaughlin, Poliquin, Vanderwerff and Zizler~\cite{MPVZ} or \cite[Theorem V.1.7]{DGZ}.

Next, we assume for $X \in \mathcal P$ that $\dens(X)=\mu$ and that every Banach space $Y\in \mathcal P$ with $\dens(Y)<\mu$ admits a \COne, LUR norm which is a limit of \ksmooth{k} norms. Let $\PRI$ be a PRI on $X$ such that $Q_\alpha X\in \mathcal P$ for each $\alpha \in \ointerval$. Then $\dens(Q_\alpha X) \leq \cardinality{\alpha+1}=\cardinality{\alpha} < \mu$. Thus the inductive hypothesis enables us to use Theorem~\ref{t:main}.
\end{proof}

The above theorem has immediate corollaries for each $\mathcal P$-class (see \cite{HMVZ} for this notion).
The following Corollary~\ref{c:Vasak} solves in the affirmative Problem~8.8~(s) in~\cite{FMZ} (see also Problem VIII.4 in~\cite{DGZ}).
\begin{cor}\label{c:Vasak} 
Let $X$ admit a \ksmooth{k} norm for some $k \in \Natural \cup {\infty}$. If $X$ is Va\v s\'ak (i.e. WCD) or WLD or $C(K)$ where $K$ is a Valdivia compact, then $X$ admits a \COne, LUR equivalent norm which is a limit (uniform on bounded sets) of \ksmooth{k} norms.
\end{cor}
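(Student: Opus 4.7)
The plan is to apply the theorem immediately preceding Corollary~\ref{c:Vasak} by choosing, for each of the three listed cases, the natural class $\mathcal P$ of Banach spaces (Va\v s\'ak, WLD, or $C(K)$ with $K$ Valdivia compact) and verifying its two bullets.

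First I would make the following general observation: if $(Y,\norm{\cdot})$ admits a \ksmooth{k} equivalent norm and $Z\subset Y$ is a closed subspace (in particular any PRI-summand $Q_\alpha X$), then the restriction of the norm to $Z$ is again an equivalent \ksmooth{k} norm on $Z$. Consequently the second bullet of the preceding theorem---that the PRI summands admit a \ksmooth{k} norm---is automatic for any class of spaces which are (closed) subspaces of a space with a \ksmooth{k} norm. Under our hypothesis on $X$ this observation suffices.

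Second, I would invoke the structural PRI theorems collected in~\cite{HMVZ}. For WLD spaces, a PRI can be built from a fundamental biorthogonal system lying in a $\Sigma$-subset of $X^*$, and the consecutive differences $Q_\alpha X$ are again WLD (in fact of strictly smaller density). For Va\v s\'ak (WCD) spaces, Valdivia's construction yields a PRI whose summands are again Va\v s\'ak. For $C(K)$ with $K$ Valdivia compact, Kalenda's theorem produces a PRI of the form $(P_\alpha f)(t)=f(r_\alpha(t))$ where $r_\alpha\colon K\to K$ are continuous retractions onto Valdivia subcompacta, so that each $Q_\alpha X$ is isomorphic to $C(K_\alpha)$ with $K_\alpha$ itself Valdivia compact.

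Combining these two steps, each of the three classes $\mathcal P$ satisfies both hypotheses of the preceding theorem, which directly delivers on $X$ an equivalent \COne, LUR norm that is a limit (uniform on bounded sets) of \ksmooth{k} norms. The main obstacle is not a new argument but the verification of PRI stability inside the class, especially in the $C(K)$, $K$ Valdivia case, which rests on the machinery of retractional skeletons and Valdivia compacta; the WLD and Va\v s\'ak cases, by contrast, are routine consequences of Valdivia's classical PRI technology, so the corollary reduces to citation of the appropriate structural results from~\cite{HMVZ}.
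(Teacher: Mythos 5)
Your approach is exactly what the paper intends: the corollary follows from the preceding theorem once one knows that Va\v s\'ak, WLD, and $C(K)$ with $K$ Valdivia are $\mathcal P$-classes in the sense of~\cite{HMVZ}, combined with your (correct) observation that a \ksmooth{k} norm restricts to an equivalent \ksmooth{k} norm on any closed subspace -- so one takes $\mathcal P$ to be the given structural class intersected with the spaces admitting a \ksmooth{k} norm, and both bullets of the theorem become routine citations. The paper itself gives no further argument, so your outline is at the right level of detail.

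One small inaccuracy worth flagging in the $C(K)$, $K$ Valdivia case: the PRI summand $Q_\alpha C(K)=\{f\in C(r_{\alpha+1}(K)) : f\upharpoonright_{r_\alpha(K)}=0\}$ is a one-complemented closed ideal of $C(r_{\alpha+1}(K))$, not in general isometric or isomorphic to a $C(K_\alpha)$ for a Valdivia compact $K_\alpha$ (it is rather a $C_0$-space over the locally compact set $r_{\alpha+1}(K)\setminus r_\alpha(K)$). To run the transfinite induction one therefore takes $\mathcal P$ slightly larger than ``$C(K)$ with $K$ Valdivia'' -- e.g.\ one-complemented subspaces of such $C(K)$'s, or spaces carrying a commutative projectional skeleton of the appropriate kind -- which is exactly how the $\mathcal P$-class framework in~\cite{HMVZ} is arranged to be stable under PRI decomposition. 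With that modest adjustment to the choice of $\mathcal P$, your proof is complete and coincides with the paper's.
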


\begin{proof}[Proof of Theorem~\ref{t:main}]
Let $0<c<1$. It follows from the hypothesis that, for each $\gamma \in \ointerval$, there are a \COne, LUR norm $\norm{\cdot}_\gamma$ on $Q_\gamma X$ and \ksmooth{k} norms $(\norm{\cdot}_{\gamma,i})_{i\in \Natural}$ on $Q_\gamma X$ such that
\begin{equation}\label{e:inductiveHypo}
c\norm{x} \leq \norm{x}_\gamma \leq \norm{x}
\end{equation}
for all $x \in Q_\gamma X$ and such that  $(1-\frac{1}{i^2})\norm{x}_\gamma\leq\norm{x}_{\gamma,i}\leq \norm{x}_\gamma$ for all $x \in Q_\gamma X$.

We seek the new norm on $X$ in the form
\[
\newnorm{x}^2:=N(x)^2+J(x)^2+\norm{x}^2.
\]
We will insure during the construction that both $N$ and $J$ are \COne\ and approximated by \ksmooth{k}\ norms.
In order to see that $\newnorm{\cdot}$ is LUR, we are going to show that $\norm{x-x_\kindex} \to 0$ provided that
\begin{equation}\label{e:LURmain}
2\newnorm{x_\kindex}^2+2\newnorm{x}^2 - \newnorm{x+x_\kindex}^2 \to 0 \mbox{ as } \kindex \to \infty.
\end{equation}
Consider the following two statements:\\
a) $\norm{P_A x_\kindex - P_A x} \to 0$ for each finite $A \subset \ointerval$ with $0\notin \set{Q_\gamma x :\gamma \in A}$,\\
b) for every $\varepsilon>0$ there exists a finite $A \subset \ointerval$ with $0\notin \set{Q_\gamma x :\gamma \in A}$ and such that $\norm{P_A x - x}<\varepsilon$ and $\norm{P_A x_\kindex -x_\kindex}<\varepsilon$ for all but finitely many $\kindex \in \Natural$.

Clearly, the simultaneous validity of a) and b) implies that $\norm{x-x_\kindex} \to 0$ as
\[
\norm{x-x_\kindex} \leq \norm{P_A x_\kindex - P_A x}+ \norm{P_A x - x} + \norm{P_A x_\kindex -x_\kindex}.
\]
We construct $N$ in such a way that we can prove in Lemma~\ref{l:blockconvergence} that \eqref{e:LURmain} implies a). Consequently, we construct $J$ in such a way that we can prove in Lemma~\ref{l:finalHit} that \eqref{e:LURmain} implies b).
\end{proof}
\section{About $N$}\label{s:AboutN}
We may and do assume that the equivalent norms $\abs{\cdot}$ and $\norm{\cdot}$ satisfy
\[
\abs{\cdot} \leq \norm{\cdot} \leq C \abs{\cdot}
\]
for some $C\geq 1$.

The basic properties of PRI~\cite[Lemma VI.1.2]{DGZ} and the above equivalence of norms yield $(\norm{Q_\gamma x})_{\gamma \in \ointerval} \in c_0(\ointerval)$, and using $\norm{Q_\gamma}\leq 2C$ with the second inequality of \eqref{e:inductiveHypo}, it follows that $T:x \in (X,\norm{\cdot}) \mapsto (\norm{Q_\gamma x}_\gamma)_{\gamma \in \ointerval} \in (c_0(\ointerval),\norm{\cdot}_\infty)$ is a $2C$-Lipschitz mapping. Similarly for $T_i:x \in X \mapsto (\norm{Q_\gamma x}_{\gamma,i})_{\gamma \in \ointerval} \in c_0(\ointerval)$.

For each $n \in \Natural$, we will consider an equivalent norm on $c_0(\ointerval)$ given as
\[
\zeta_n(x):=\sup_{M\in\ointerval_n} \sqrt{\sum_{\gamma \in M} x(\gamma)^2}
\]
where $\ointerval_n:=\set{M \in 2^\ointerval: \cardinality{M}=n}$. It is easily seen that $\zeta_n$ is $n$-Lipschitz with
respect to the usual norm on $c_0(\Lambda)$. Also, $\zeta_n$ is obviously strongly lattice, so by Theorem~1 in~\cite{FHZ}, for each $\varepsilon>0$ there is a \smooth\ equivalent norm $N_{n,\varepsilon}$ on $c_0(\ointerval)$ such that
$(1-\varepsilon)\zeta_n(x)\leq N_{n,\varepsilon}(x)\leq \zeta_n(x)$
for all $x \in c_0(\ointerval)$ with $\norm{x}_\infty \leq 1$.
Finally, we define
\[
N(x)^2:=\sum_{m,n \in \Natural} \frac{1}{2^{n+m}} N^2_{n,\frac{1}{m}}(T(x))
\]
Now the norm $N(\cdot)$ is \COne\ since each $N_{n,\frac{1}{m}}\circ T$ is $2nC$-Lipschitz and \COne. The latter property follows since each $N_{n,\frac{1}{m}}$ is not only LFC but it depends  on nonzero coordinates only (cf. Remark on page 461 in~\cite{Hay}).
This fact is not explicitely mentioned in~\cite{FHZ} but follows from the proof there (see~\cite[p. 270]{FHZ}).
We may define the approximating norms as
\[
N_i(x)^2:=\sum_{m,n=1}^i \frac{1}{2^{n+m}} N^2_{n,\frac{1}{m}}(T_i(x)).
\]
As a finite sum of \ksmooth{k} norms, $N_i$ is \ksmooth{k}. Using $\norm{T_i(x)-T(x)}_\infty \leq \frac{2C}{i^2}$ for $\norm{x}\leq 1$, it is standard to check that $N_i(x) \to N(x)$ uniformly for $\norm{x}\leq 1$. We carry out some similar considerations in more detail on page~\pageref{e:similarconsiderations} when we demonstrate that $J$ is approximated by \ksmooth{k} norms.
\begin{lem}\label{l:blockconvergence}
Let us assume that \eqref{e:LURmain} holds for $x, x_\kindex \in X$, $\kindex \in \Natural$, and let $\tilde{A} \subset \ointerval$ be a finite set such that $Q_\gamma x \neq 0$ for $\gamma \in \tilde{A}$. Then $\norm{P_{\tilde{A}} x - P_{\tilde{A}} x_\kindex} \to 0$ as $k \to \infty$.
\end{lem}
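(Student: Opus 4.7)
The plan is to exploit the sum decomposition $\newnorm{\cdot}^2 = N^2 + J^2 + \norm{\cdot}^2$, then the internal sum-of-squares structure of $N$ itself, pushing the LUR hypothesis \eqref{e:LURmain} down to coordinatewise statements on $T(x_r)$ and $T(\tfrac{x+x_r}{2})$, and finally invoking the LUR of each block norm $\norm{\cdot}_\gamma$. Since each of $N, J, \norm{\cdot}$ is convex, nonnegative, and positively homogeneous (hence subadditive), the basic inequality $(p+q)^2 \le 2p^2 + 2q^2$ gives that each of the three quantities $2f(x_r)^2 + 2f(x)^2 - f(x+x_r)^2$ with $f \in \set{N, J, \norm{\cdot}}$ is nonnegative; their sum being exactly the expression in \eqref{e:LURmain}, each tends to $0$, and I concentrate on the $N$-summand. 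Writing $N^2(y) = \sum_{n,m} 2^{-n-m}\varphi_{n,m}(y)^2$ with $\varphi_{n,m}(y) := N_{n,1/m}(T(y))$, Lemma~\ref{l:SLconvexComposition} (applied with the strongly lattice norm $N_{n,1/m}$ and the coordinatewise convex, nonnegative $T$) ensures each $\varphi_{n,m}$ is a nonnegative convex positively homogeneous function on $X$. The same nonnegativity-of-summands argument applied term-by-term to the series extracts $2\varphi_{n,m}(x_r)^2 + 2\varphi_{n,m}(x)^2 - \varphi_{n,m}(x+x_r)^2 \to 0$ for each $(n,m)$, and Lemma~\ref{l:equivLUR} then yields that both $\varphi_{n,m}(x_r)$ and $\varphi_{n,m}(\tfrac{x+x_r}{2})$ tend to $\varphi_{n,m}(x)$.

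Next I pass from $N_{n,1/m}$ to $\zeta_n$. Boundedness of $\set{x_r}$ follows from \eqref{e:LURmain} (which forces $\newnorm{x_r} \to \newnorm{x}$), and $(1-1/m)\zeta_n \le N_{n,1/m} \le \zeta_n$ lets me send $m \to \infty$ to obtain, for every $n$, both $\zeta_n(T(x_r)) \to \zeta_n(T(x))$ and $\zeta_n(T(\tfrac{x+x_r}{2})) \to \zeta_n(T(x))$. Combining $T(\tfrac{x+x_r}{2}) \le \tfrac{T(x)+T(x_r)}{2}$ coordinatewise with the strong latticeness of $\zeta_n$ and the sandwich
\[
\zeta_n(T(\tfrac{x+x_r}{2})) \le \zeta_n(\tfrac{T(x)+T(x_r)}{2}) \le \tfrac12\bigl[\zeta_n(T(x)) + \zeta_n(T(x_r))\bigr]
\]
also gives $\zeta_n(\tfrac{T(x)+T(x_r)}{2}) \to \zeta_n(T(x))$. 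I now enumerate the nonzero coordinates of $T(x) \in c_0(\ointerval)$ in nonincreasing order $T(x)_{\gamma_1} \ge T(x)_{\gamma_2} \ge \dots > 0$, and choose $n$ so that $M := \set{\gamma_1,\dots,\gamma_n} \supseteq \tilde A$ with a strict gap $T(x)_{\gamma_n} > T(x)_{\gamma_{n+1}}$; such an $n$ exists because $T(x) \in c_0(\ointerval)$, $\tilde A$ is finite, and $T(x)_\gamma > 0$ on $\tilde A$. Letting $u$ and $u_r$ be the restrictions of $T(x)$ and $T(x_r)$ to $M$, viewed as elements of the finite-dimensional Hilbert space $\ell^2(M)$, the choice of $M$ yields $\norm{u}_2 = \zeta_n(T(x))$, together with $\norm{u_r}_2 \le \zeta_n(T(x_r)) \to \norm{u}_2$ and $\norm{\tfrac{u+u_r}{2}}_2 \le \zeta_n(\tfrac{T(x)+T(x_r)}{2}) \to \norm{u}_2$.

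The hard part will be upgrading the two $\limsup$ bounds just obtained to genuine limits $\norm{u_r}_2 \to \norm{u}_2$ and $\norm{\tfrac{u+u_r}{2}}_2 \to \norm{u}_2$, which - via the parallelogram identity in $\ell^2(M)$ - force $u_r \to u$, that is, $T(x_r)_\gamma \to T(x)_\gamma$ for every $\gamma \in M \supseteq \tilde A$. My intended argument for the matching liminfs is by contradiction, exploiting the strict gap: if along some subsequence $u_{r_k} \to v$ with $v \ne u$, then, since $\norm{v}_2 \le \norm{u}_2$, some coordinate $\gamma_0 \in M$ satisfies $v_{\gamma_0} < u_{\gamma_0}$; subadditivity of $\norm{\cdot}_{\gamma_0}$ forces $\limsup \norm{Q_{\gamma_0}\tfrac{x+x_{r_k}}{2}}_{\gamma_0} \le \tfrac{u_{\gamma_0}+v_{\gamma_0}}{2} < u_{\gamma_0}$, producing a mass deficit on $M$ for the midpoint $T(\tfrac{x+x_{r_k}}{2})$; to keep $\zeta_n(T(\tfrac{x+x_{r_k}}{2}))$ at its limit $\norm{u}_2$, this deficit would have to be compensated by sizeable values of $T(x_{r_k})$ on coordinates $\gamma' \notin M$, which, via the strict gap between $T(x)_{\gamma_n}$ and $T(x)_{\gamma_{n+1}}$, is incompatible with the convergence of $\zeta_{n+1}(T(x_r))$. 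Once $u_r \to u$ (and analogously for the midpoint) is established, I read off $\norm{Q_\gamma x_r}_\gamma \to \norm{Q_\gamma x}_\gamma$ and $\norm{Q_\gamma \tfrac{x+x_r}{2}}_\gamma \to \norm{Q_\gamma x}_\gamma$ for each $\gamma \in \tilde A$; LUR of $\norm{\cdot}_\gamma$ on $Q_\gamma X$ combined with Lemma~\ref{l:equivLUR} then gives $Q_\gamma x_r \to Q_\gamma x$ in $\norm{\cdot}_\gamma$, hence in $\norm{\cdot}$ via $c\norm{\cdot} \le \norm{\cdot}_\gamma$, and summing over the finite set $\tilde A$ closes the proof:
\[
\norm{P_{\tilde A} x_r - P_{\tilde A} x} \le \sum_{\gamma \in \tilde A} \norm{Q_\gamma x_r - Q_\gamma x} \to 0.
\]
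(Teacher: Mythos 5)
Your proposal is on the right track and shares the two anchors of the paper's proof — pushing the LUR hypothesis down to the $\zeta_n$ level via term-positivity and the uniform convergence $N_{n,1/m}\to\zeta_n$, and choosing the top-$n$ set with a spectral gap containing $\tilde A$ — but from that point on it diverges from the paper and, as written, it leaves the decisive step as a sketch.

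The paper does \emph{not} separate the LUR information into the three scalar limits $\zeta_n(Tx_r)\to\zeta_n(Tx)$, $\zeta_n(T(\tfrac{x+x_r}{2}))\to\zeta_n(Tx)$, $\zeta_n(\tfrac{Tx+Tx_r}{2})\to\zeta_n(Tx)$. It keeps the unsplit quantity $2\zeta_n^2(Tx_r)+2\zeta_n^2(Tx)-\zeta_n^2(T(x+x_r))\to 0$ (note: $T(x+x_r)$, i.e.\ $(\norm{Q_\gamma(x+x_r)}_\gamma)_\gamma$, not $Tx+Tx_r$), lets $A_\kindex$ be the $n$-set attaining $\zeta_n(T(x+x_\kindex))$, and by the regrouping of \eqref{e:Deville} forces $\liminf_\kindex\sum_{A_\kindex}\norm{Q_\gamma x}^2_\gamma\geq\zeta_n^2(Tx)$, hence $A_\kindex=A$ eventually. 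Taking $B=A$ then yields $2\sum_A\norm{Q_\gamma x}^2_\gamma+2\sum_A\norm{Q_\gamma x_\kindex}^2_\gamma-\sum_A\norm{Q_\gamma(x+x_\kindex)}^2_\gamma\to 0$, and the LUR of the finite $\ell^2$-sum norm on $P_A X$ gives $\norm{P_A(x-x_\kindex)}\to 0$ in one stroke. That last step is the decisive shortcut: it never needs coordinate-by-coordinate convergence $\norm{Q_\gamma x_\kindex}_\gamma\to\norm{Q_\gamma x}_\gamma$, which is what you are laboring to extract.

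Your route has two genuine soft spots. First, the "mass deficit cannot be compensated" step is only gestured at. As stated — invoking $\zeta_{n+1}(Tx_r)$ — it does not obviously close: if $\norm{v}_2<\norm{u}_2$ the $\zeta_{n+1}$ bound on $T(x_r)_{\gamma'}$ still allows values $\geq T(x)_{\gamma_{n+1}}$. There is a clean fix, but it uses $\zeta_n$ rather than $\zeta_{n+1}$: for any $n$-set $M'\neq M$ the triangle inequality in $\ell^2(M')$ gives
\[
\sum_{\gamma\in M'}T\bigl(\tfrac{x+x_r}{2}\bigr)_\gamma^2\leq\Bigl(\tfrac{\sqrt{\norm{u}_2^2-\epsilon_0}+\zeta_n(Tx_r)}{2}\Bigr)^2
\]
with $\epsilon_0:=T(x)_{\gamma_n}^2-T(x)_{\gamma_{n+1}}^2>0$; this bound is uniform in $M'$ and its $\limsup$ is strictly below $\norm{u}_2^2$, so the supremum in $\zeta_n(T(\tfrac{x+x_r}{2}))$ is eventually attained on $M$, which then pins both $\liminf\norm{u_r}_2\geq\norm{u}_2$ and $\liminf\norm{\tfrac{u+u_r}{2}}_2\geq\norm{u}_2$, and the parallelogram law gives $u_r\to u$ (no subsequence or contradiction needed). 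Second, the phrase ``(and analogously for the midpoint)'' hides real work: $u_r\to u$ alone does \emph{not} give $\norm{Q_\gamma\tfrac{x+x_r}{2}}_\gamma\to\norm{Q_\gamma x}_\gamma$ (take $Q_\gamma x_r=-Q_\gamma x$). You must separately show $T(\tfrac{x+x_r}{2})\!\upharpoonright_M\to u$; this does follow once the sup in $\zeta_n$ of the midpoint is pinned on $M$, combined with the coordinatewise $\limsup$ bound $T(\tfrac{x+x_r}{2})_\gamma\leq\tfrac{u_\gamma+(u_r)_\gamma}{2}\to u_\gamma$, but you should say so explicitly — and only then can you invoke Lemma~\ref{l:equivLUR} and the LUR of each $\norm{\cdot}_\gamma$. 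So: the strategy is sound and genuinely different from the paper's, but two of its central steps are under-argued; the paper's Deville-style comparison against the maximizing set $A_\kindex$, coupled with LUR of the finite $\ell^2$-sum norm, is shorter and sidesteps both issues.
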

\begin{proof}
Let $A:=\set{\gamma \in \ointerval: \norm{Q_\gamma x}_\gamma \geq \min_{\alpha \in \tilde{A}} \norm{Q_\alpha x}_\alpha}$. Let $n:=\cardinality{A}$. We may assume that $\newnorm{x}\leq 1$ which implies $\norm{Tx}_\infty \leq 2C$. Using \eqref{e:LURmain} and Lemma~\ref{l:equivLUR} we may assume that $\newnorm{x_\kindex} \leq 2$ thus $\norm{Tx}_\infty \leq 4C$.
The convergence \eqref{e:LURmain} and convexity (see Fact II.2.3 in~\cite{DGZ}) imply that
\[
2N_{n,\frac{1}{m}}^2(T(x_\kindex)) +2N_{n,\frac{1}{m}}^2(T(x))-N_{n,\frac{1}{m}}^2(T(x+x_\kindex)) \too{\kindex} 0
\]
for all $m \in \Natural$.
This further yields that
\[
2\zeta_n^2(T(x_\kindex))+2\zeta_n^2(T(x))-\zeta_n^2(T(x+x_\kindex)) \too{\kindex} 0
\]
as well.
Indeed, let $\varepsilon>0$ be given. We use that $N_{n,\frac{1}{m}} \to \zeta_n$ uniformly on bounded sets of $c_0(\ointerval)$ to find $m_0 \in \Natural$ such that
$\abs{N_{n,\frac{1}{m}}^2(y)- \zeta_n^2(y)}<\varepsilon/6$ for all $y \in 6C\closedball{c_0(\ointerval)}$ and all $m\geq m_0$. Now let $\kindex_0\in \Natural$ satisfy that for all $\kindex \geq \kindex_0$ it holds
$2N_{n,\frac{1}{m_0}}^2(T(x_\kindex)) +2N_{n,\frac{1}{m_0}}^2(T(x))-N_{n,\frac{1}{m_0}}^2(T(x+x_\kindex))<\varepsilon/6$. For each $\kindex \geq \kindex_0$ we obtain
$2\zeta_n^2(T(x_\kindex))+2\zeta_n^2(T(x))-\zeta_n^2(T(x+x_\kindex))<\varepsilon$.

Let $B\in \ointerval_n$ be arbitrary and let $A_\kindex \in \ointerval_n$ such that
\[
\sqrt{\sum_{\gamma \in A_\kindex} \norm{Q_\gamma (x+x_\kindex)}_\gamma^2}=\zeta_n(x+x_\kindex).
\]
Then
\begin{equation}\label{e:Deville}
\begin{split}
2\zeta_n^2(T(x_\kindex))+2\zeta_n^2(T(x))-\zeta_n^2(T(x+x_\kindex))
&\geq 2\sum_{\gamma \in B} \norm{Q_\gamma x}_\gamma^2+2\sum_{\gamma \in A_\kindex} \norm{Q_\gamma x_\kindex}_\gamma^2-\sum_{\gamma \in A_\kindex} \norm{Q_\gamma (x+x_\kindex)}_\gamma^2\\
&= 2\sum_{\gamma \in A_\kindex} \norm{Q_\gamma x}_\gamma^2+2\sum_{\gamma \in A_\kindex} \norm{Q_\gamma x_\kindex}_\gamma^2-\sum_{\gamma \in A_\kindex} \norm{Q_\gamma (x+x_\kindex)}_\gamma^2\\ &+ 2\left(\sum_{\gamma \in B} \norm{Q_\gamma x}_\gamma^2 - \sum_{\gamma \in A_\kindex} \norm{Q_\gamma x}_\gamma^2\right)
\end{split}
\end{equation}
Since \[
2\sum_{\gamma \in A_\kindex} \norm{Q_\gamma x}_\gamma^2+2\sum_{\gamma \in A_\kindex} \norm{Q_\gamma x_\kindex}_\gamma^2-\sum_{\gamma \in A_\kindex} \norm{Q_\gamma (x+x_\kindex)}_\gamma^2\geq 0
\]
we get from \eqref{e:Deville} that
\begin{equation}\label{e:liminf}
\liminf_\kindex \sum_{\gamma \in A_\kindex} \norm{Q_\gamma x}_\gamma^2 \geq \sup\set{ \sum_{\gamma \in B} \norm{Q_\gamma x}_\gamma^2:B \in \ointerval_n}=\zeta_n(T(x))=\sum_{\gamma \in A} \norm{Q_\gamma x}_\gamma^2
\end{equation}
where the last equality follows from the definition of $A$. Equation \eqref{e:liminf} together with the definition of $A$ show that $A=A_\kindex$ for all $\kindex$ sufficiently large. We continue with such $\kindex$ and we choose $B:=A$ in \eqref{e:Deville} to get that
\[
2\sum_{\gamma \in A} \norm{Q_\gamma x}_\gamma^2+2\sum_{\gamma \in A} \norm{Q_\gamma x_\kindex}_\gamma^2-\sum_{\gamma \in A} \norm{Q_\gamma (x+x_\kindex)}_\gamma^2 \too{\kindex} 0.
\]
Since $x \mapsto \sqrt{\sum_{\gamma \in A} \norm{Q_\gamma x}_\gamma^2}$ is an equivalent LUR norm on $P_A X$, it follows that $\norm{P_A (x-x_\kindex)} \to 0$ and, by continuity of $P_{\tilde{A}}$, we obtain the claim of the lemma.
\end{proof}

\section{About $J$}\label{s:AboutJ}
Let $\set{\phi_\eta}_{0<\eta<1}$ be a system of functions satisfying
\begin{list}{}{}
\item{(i)} $\phi_\eta:\Rnneg \to \Rnneg$, for $0<\eta<1$, is a convex \smooth\ function such that $\phi_\eta$ is strictly convex on $[1-\eta,+\infty)$, $\phi_\eta([0,1-\eta])=\set{0}$ and $\phi_\eta(1)=1$.
\item{(ii)} If $0<\eta_1\leq \eta_2 <1$ then $\phi_{\eta_1}(x)\leq \phi_{\eta_2}(x)$ for any $x \in [0,1]$.  
\end{list}
One example of such a system can be constructed as follows: let $\phi:\Real \to \Real$ be \smooth\ such that $\phi(x)=0$ if $x\leq 0$, $\phi(1)=1$ and $\phi$ is increasing and strictly convex on $[0,+\infty)$. We define $\phi_\eta(x):=\phi(\frac{x-(1-\eta)}{\eta})$ for all $x \in [0,1]$. Now the system $\set{\phi_\eta}$ satisfies (ii) since $\eta \mapsto \frac{x-(1-\eta)}{\eta}$ is increasing for every $x \in [0,1)$ while the validity of (i) follows from properties of $\phi$.

We define a function $\Phi_\eta:\linf(\Gamma) \to \ereal$ by
\[
\Phi_\eta(x)= \sum_{\gamma \in \Gamma} \phi_\eta(\abs{x(\gamma)}).
\]

Let us define $Z_\eta:\linf(\Gamma) \to \Real$ as the Minkowski functional of the set $C=\set{x\in\linf(\Gamma); \Phi_\eta(x)< 1/2}$.

\begin{lem}\label{l:FetaDef}
Let $0<\eta<1$ be fixed.
Then $Z_\eta$ is a strongly lattice seminorm such that $(1-\eta)Z_\eta(x)\leq \norm{x}_\infty$ and $Z_\eta$ is LFC, \smooth\ and strictly positive in the set \[A_\eta(\Gamma):=\{x \in \linf(\Gamma):
  \finitetop{x}<(1-\eta)\norm{x}_\infty\}.\]
Moreover $(1-\eta)Z_\eta(x)< \norm{x}_\infty$ for all $x\in A_\eta(\Gamma)$.
\end{lem}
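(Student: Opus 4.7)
The plan is to dispatch the easy claims (seminorm, strongly lattice, $(1-\eta)Z_\eta(x)\leq\norm{x}_\infty$, strict positivity, and the \emph{moreover}) directly from the Minkowski-functional definition, and then to establish LFC and \smooth{}ness on $A_\eta(\Gamma)$ by reducing $\Phi_\eta$ locally to a finite smooth sum around $\tilde x:=x/Z_\eta(x)$ and applying Lemma~\ref{l:implicitLFC}.

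For the easy part, $C$ is convex (each $x(\gamma)\mapsto\phi_\eta(\abs{x(\gamma)})$ is convex as the composition of the non-decreasing convex $\phi_\eta$ with $\abs{\cdot}$), balanced, and lattice-closed in the coordinate order (since $\Phi_\eta$ depends only on $\abs{x(\gamma)}$ and $\phi_\eta$ is non-decreasing), so $Z_\eta$ is a strongly lattice seminorm. At $t=\norm{x}_\infty/(1-\eta)$ one has $\abs{x(\gamma)}/t\leq 1-\eta$ and so $\Phi_\eta(x/t)=0<1/2$, which yields $(1-\eta)Z_\eta(x)\leq\norm{x}_\infty$. Strict positivity on $A_\eta(\Gamma)$ follows from $\phi_\eta(r)\to+\infty$ (easy from strict convexity on $[1-\eta,\infty)$ and $\phi_\eta(1)=1$): any $x\neq 0$ has a coordinate $x(\gamma_0)\neq 0$ making $\Phi_\eta(x/t)>1/2$ for small $t$. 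The \emph{moreover} clause follows by taking $s\in(\finitetop{x},(1-\eta)\norm{x}_\infty)$: for $t$ slightly below $\norm{x}_\infty/(1-\eta)$, $\Phi_\eta(x/t)$ reduces to a finite continuous sum over $\set{\gamma:\abs{x(\gamma)}>s}$ tending to $0$ as $t\uparrow\norm{x}_\infty/(1-\eta)$, so it is below $1/2$ just beneath the endpoint.

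The core step, and the expected main obstacle, is the strict gap $(1-\eta)Z_\eta(x)>\finitetop{x}$ for $x\in A_\eta(\Gamma)$. By strict monotonicity of $\phi_\eta$ on $[1-\eta,\infty)$ together with $\phi_\eta(1-\eta)=0$ and $\phi_\eta(1)=1$, there is a unique $y^\ast:=\phi_\eta^{-1}(1/2)\in(1-\eta,1)$. Setting $t_0:=\finitetop{x}/(1-\eta)$, the $A_\eta(\Gamma)$-condition yields $\norm{x}_\infty>t_0>y^\ast t_0$, so I can choose $\gamma_0$ with $\abs{x(\gamma_0)}>y^\ast t_0$. Then $\abs{x(\gamma_0)}/t>y^\ast$ persists on a right-neighborhood of $t_0$, forcing $\Phi_\eta(x/t)\geq\phi_\eta(\abs{x(\gamma_0)}/t)>1/2$ there, and hence $Z_\eta(x)>t_0$. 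Without this strict separation, infinitely many coordinates of $\tilde x$ could cluster at the transition level $1-\eta$ of $\phi_\eta$, and both LFC and continuity of $\Phi_\eta$ at $\tilde x$ would fail; the normalization $\phi_\eta(1)=1$ is crucial here, as it forces $y^\ast<1$ and gives the room needed to make a single coordinate's contribution exceed $1/2$ on a right-neighborhood of $t_0$.

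Once the gap is in hand, pick $s\in(\finitetop{x},(1-\eta)Z_\eta(x))$ and set $M:=\set{\gamma:\abs{x(\gamma)}>s}$, a finite set. For $\gamma\notin M$, $\abs{\tilde x(\gamma)}\leq s/Z_\eta(x)<1-\eta$, so on a small $\norm{\cdot}_\infty$-neighborhood $U$ of $\tilde x$ the identity $\Phi_\eta(z)=\sum_{\gamma\in M}\phi_\eta(\abs{z(\gamma)})$ holds, exhibiting $\Phi_\eta$ as a finite smooth sum, in particular LFC and \smooth{} at $\tilde x$ with continuous derivative. Continuity of $t\mapsto\Phi_\eta(x/t)$ on $(t_0,\norm{x}_\infty/(1-\eta))$ forces $\Phi_\eta(\tilde x)=1/2$, while $\norm{x}_\infty>(1-\eta)Z_\eta(x)$ provides some $\gamma$ with $\abs{\tilde x(\gamma)}>1-\eta$, where $\phi_\eta'>0$ by strict convexity, giving $\Phi_\eta'(\tilde x)\tilde x>0$. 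Lemma~\ref{l:implicitLFC} applied to $2\Phi_\eta$ at $\tilde x$ then yields an LFC and \smooth{} function $F$ with $F(\tilde x)=1$ and $\Phi_\eta(z/F(z))=1/2$ on $U$; the uniqueness clause together with homogeneity of $Z_\eta$ identifies $F$ with $Z_\eta$ near $\tilde x$, and a rescaling transports LFC and \smooth{}ness back to $x$.
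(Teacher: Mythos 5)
Your proof is correct, and the overall skeleton matches the paper's (Minkowski functional, reduce $\Phi_\eta$ to a finite smooth sum near a normalized point, apply the Implicit Function Theorem together with Lemma~\ref{l:implicitLFC}, and then transport back by homogeneity). Where you genuinely diverge is in the normalization step. The paper never isolates the strict inequality $(1-\eta)Z_\eta(x)>\finitetop{x}$ as a separate claim: it instead observes that the whole segment $[0,x/\norm{x}_\infty]$ lies in $A'_\eta(\Gamma)=\set{\finitetop{\cdot}<1-\eta}$, that $\Phi_\eta(x/\norm{x}_\infty)\geq 1$ and $\Phi_\eta(0)=0$, and uses the intermediate value theorem along that segment to produce $\lambda$ with $\Phi_\eta(\lambda x)=1/2$; the ``moreover'' inequality then falls out at the very end from $\Phi_\eta(\tilde x)=1/2>0$ forcing some coordinate of $\tilde x$ above $1-\eta$. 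You go the other way round: you establish the strict gap directly from the existence of the threshold $y^\ast=\phi_\eta^{-1}(1/2)\in(1-\eta,1)$ (which cleanly isolates exactly why the normalization $\phi_\eta(1)=1$ is needed), and you prove the ``moreover'' part by a separate limiting argument for $t\uparrow\norm{x}_\infty/(1-\eta)$. Your argument for the strongly lattice property (lattice-closedness of $C$ in the coordinate order) is also cleaner than the paper's, which works with $\partial C$ and the dichotomy $\finitetop{x}=1-\eta$ or $\Phi_\eta(x)=1/2$. Two small points of imprecision: Lemma~\ref{l:implicitLFC} as stated concludes only LFC, not smoothness, so the \smooth ness of $F$ should be attributed to the Implicit Function Theorem separately as the paper does; and the assertion ``$\phi_\eta(r)\to+\infty$'' is not among the listed hypotheses on $\phi_\eta$ but does follow from convexity and $\phi_\eta(1-\eta)=0$, $\phi_\eta(1)=1$ (although you don't actually need it — $\phi_\eta(1)=1>1/2$ already suffices for strict positivity). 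Neither is a gap.
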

\begin{proof}
The set $C$ is symmetric convex with zero as interior point (indeed, $(1-\eta)\closedball{\linf(\Gamma)} \subset C$) so $Z_\eta$ is $\frac{1}{1-\eta}$-Lipschitz and convex.

Let $A_\eta'(\Gamma):=\set{x\in \linf(\Gamma):\finitetop{x}<1-\eta}$. This set is convex and open since $\finitetop{\cdot}$ is a continuous and convex (seminorm).
The function $\Phi_\eta$ is in $A_\eta'(\Gamma)$ a locally finite sum of convex \smooth\  functions, thus it is a convex function which is  LCF and \smooth\ in $A_\eta'(\Gamma)$.

Let us fix $x_0 \in A'_\eta(\Gamma)$
such that $\Phi_\eta(x_0)=1/2$. Then, since $\phi_\eta$ is increasing at the points where it is not zero, we get $Z_\eta(x_0)=1$ and $\Phi_\eta'(x_0)x_0>0$. As is usual, we consider the equation $\Phi_\eta\left(\frac{x}{Z_\eta(x)}\right)=\frac{1}{2}$. By the Implicit Function Theorem, this equation locally redefines $Z_\eta$ and proves that $Z_\eta$ is \smooth\ on some neighborhood $U$ of $x_0$ since $\Phi_\eta$ is. Moreover by application of Lemma~\ref{l:implicitLFC} we get that $Z_\eta$ is LFC at $x_0$.

To prove that $Z_\eta$ is LFC, strictly positive and \smooth\ in $A_\eta(\Gamma)$ it is enough to show that for each $x\in A_\eta(\Gamma)$ there is $\lambda >0$ such that $\lambda x \in A'_\eta(\Gamma)$ and $\Phi_\eta(\lambda \cdot x)=1/2$ and then use the homogeneity of $Z_\eta$.

Let $x \in A_\eta(\Gamma)$.
Then $\finitetop{\frac{x}{\norm{x}_\infty}}<1-\eta$ and since $A_\eta'(\Gamma)$ is convex, it follows that $[0,\frac{x}{\norm{x}_\infty}] \subset  A_\eta'(\Gamma)$.
We have for such $x$ that $\Phi_\eta(\frac{x}{\norm{x}_\infty})\geq 1$, $\Phi_\eta(0\cdot x)=0$ and the mapping $\lambda \mapsto \Phi_\eta(\lambda x)$ is continuous for $\lambda \in [0,\frac{1}{\norm{x}_\infty}]$. Hence there must exist $\lambda \in (0,\frac{1}{\norm{x}_\infty})$ such that $\lambda x \in A_\eta'(\Gamma)$ and $\Phi_\eta(\lambda \cdot x)=1/2$. 

We continue showing that $Z_\eta$ is strongly lattice. First observe that $\Phi_\eta$ is strongly lattice as $\phi_\eta$ is nondecreasing.
Let $\abs{x} \leq \abs{y}$ and $Z_\eta(x)=1$. Then $x\in \partial C$ which implies that $\finitetop{x}=1-\eta$ or $\Phi_\eta(x)=1/2$. Since both functions $\finitetop{\cdot}$ and $\Phi_\eta$ are strongly lattice, we conclude that $\finitetop{y}\geq 1-\eta$ or $\Phi_\eta(y)\geq 1/2$ which in turn implies that $Z_\eta(y)\geq 1$. For a general $x$ we employ the homogeneity of $Z_\eta$, so $Z_\eta$ is strongly lattice.

Finally, if $x \in A_\eta(\Gamma)$, then the above considerations imply that $\Phi_\eta\left(\frac{x}{Z_\eta(x)}\right)=1/2$. This is possible only if there is some $\gamma \in \Gamma$ such that $\frac{x(\gamma)}{Z_\eta(x)}>1-\eta$, and the moreover claim follows.
\end{proof}

\begin{lem}\label{l:monotonicity}
Let $0<\eta_1\leq \eta_2<1$. Then $Z_{\eta_1}(x)\leq Z_{\eta_2}(x)$ for every $x \in A_{\eta_2}(\Gamma)$.
\end{lem}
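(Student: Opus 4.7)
The plan is to pass to the Minkowski-functional description of $Z_\eta$ given in the proof of Lemma~\ref{l:FetaDef} and reduce the inequality to the pointwise comparison $\phi_{\eta_1}\le \phi_{\eta_2}$ on $[0,1]$ supplied by property (ii).

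First I would observe that the hypothesis $x\in A_{\eta_2}(\Gamma)$ actually forces $x\in A_{\eta_1}(\Gamma)$, because $1-\eta_1\ge 1-\eta_2$ and hence
\[
\finitetop{x}<(1-\eta_2)\norm{x}_\infty\le (1-\eta_1)\norm{x}_\infty,
\]
so both $Z_{\eta_1}(x)$ and $Z_{\eta_2}(x)$ are strictly positive and the identities established in Lemma~\ref{l:FetaDef} apply at $x$. In particular, setting $y:=x/Z_{\eta_2}(x)$, we have $\Phi_{\eta_2}(y)=1/2$.

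Next I would show that $\norm{y}_\infty\le 1$, so that the pointwise bound (ii) can be invoked. Since $\Phi_{\eta_2}(y)=\sum_\gamma\phi_{\eta_2}(\abs{y(\gamma)})=1/2$, each summand satisfies $\phi_{\eta_2}(\abs{y(\gamma)})\le 1/2<1=\phi_{\eta_2}(1)$; combined with the fact (from (i)) that $\phi_{\eta_2}$ is strictly increasing on $[1-\eta_2,+\infty)$ and vanishes on $[0,1-\eta_2]$, this yields $\abs{y(\gamma)}<1$ for every $\gamma\in\Gamma$. Applying (ii) coordinatewise then gives
\[
\Phi_{\eta_1}(y)=\sum_{\gamma\in\Gamma}\phi_{\eta_1}(\abs{y(\gamma)})\le\sum_{\gamma\in\Gamma}\phi_{\eta_2}(\abs{y(\gamma)})=\Phi_{\eta_2}(y)=\tfrac{1}{2}.
\]

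Finally I would conclude using the Minkowski-functional definition of $Z_{\eta_1}$. For every $t>1$ the vector $y/t$ satisfies $\abs{(y/t)(\gamma)}<\abs{y(\gamma)}$ whenever $y(\gamma)\ne 0$, so by monotonicity of $\phi_{\eta_1}$ and its strict increase on $[1-\eta_1,+\infty)$ one obtains $\Phi_{\eta_1}(y/t)<1/2$, i.e.\ $y/t\in C_{\eta_1}$. Hence $Z_{\eta_1}(y)\le 1$, which by homogeneity of $Z_{\eta_1}$ rewrites as $Z_{\eta_1}(x)\le Z_{\eta_2}(x)$, as required.

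I do not expect any real obstacle here; the only place calling for a tiny bit of care is the verification that $\norm{y}_\infty\le 1$, which is needed so that the one-sided comparison (ii) (valid only on $[0,1]$) suffices.
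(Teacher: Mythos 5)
Your proof is correct and follows essentially the same route as the paper's: both arguments normalize $x$ so that one of the two Minkowski functionals equals $1$, invoke property~(ii) of the $\phi_\eta$'s coordinatewise to compare $\Phi_{\eta_1}$ and $\Phi_{\eta_2}$ at that normalized point, and then translate back via homogeneity. The only cosmetic difference is the side of normalization (you set $Z_{\eta_2}(y)=1$ and deduce $Z_{\eta_1}(y)\le 1$, whereas the paper sets $Z_{\eta_1}(\lambda x)=1$ and deduces $Z_{\eta_2}(\lambda x)\ge 1$); you are also a bit more explicit in checking $\norm{y}_\infty\le 1$, which is needed since (ii) is stated only on $[0,1]$ and is used silently in the paper.
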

\begin{proof}
First of all, if $x \in A_{\eta_2}(\Gamma)$, then $x \in A_{\eta_1}(\Gamma)$. So the equivalence $Z_{\eta_i}(\lambda x)=1 \Leftrightarrow \Phi_{\eta_i}(\lambda x)=1/2$ holds for both $i=1,2$. Let us assume that $Z_{\eta_1}(\lambda x)=1$ for some $\lambda>0$. Then the ordering of functions $\phi_\eta$ yields $1/2=\Phi_{\eta_1}(\lambda x) \leq \Phi_{\eta_2}(\lambda x)$ which results in $Z_{\eta_2}(\lambda x)\geq 1$. 
\end{proof}

\begin{lem}\label{l:PWZ}
Let $0<\eta<1$ be given and let $x_\kindex, x \in A_\eta(\Gamma)$ ($\kindex \in \Natural$) be non-negative (in the lattice $\linf(\Gamma)$) such that
\[
2Z^2_\eta(x)+2Z^2_\eta(x_\kindex)-Z^2_\eta(x+x_\kindex)\to 0 \mbox{ as } \kindex \to \infty.
\]
Then $x_\kindex(\gamma) \to x(\gamma)$ for any $\gamma \in \Gamma$ such that $x(\gamma) > Z_\eta(x)(1-\eta)$.
\end{lem}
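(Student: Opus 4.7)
The plan is to reduce the claim to a coordinate-wise strict-convexity argument for $\phi_\eta$ via a Jensen inequality applied to $\Phi_\eta$. First, I apply Lemma~\ref{l:equivLUR} to the convex, non-negative, homogeneous $\varphi := Z_\eta$: the equivalence (iii)$\Leftrightarrow$(ii) converts the hypothesis into $Z_\eta(x_\kindex) \to \alpha := Z_\eta(x) > 0$ and $Z_\eta((x+x_\kindex)/2) \to \alpha$. Normalising by $\tilde x := x/\alpha$ and $\tilde x_\kindex := x_\kindex/Z_\eta(x_\kindex)$ (for $\kindex$ large), both vectors are non-negative members of $A_\eta(\Gamma)$ with $Z_\eta = 1$, and thus by the characterisation obtained in Lemma~\ref{l:FetaDef} satisfy $\Phi_\eta(\tilde x) = \Phi_\eta(\tilde x_\kindex) = 1/2$. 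In particular every coordinate is bounded by $M := \phi_\eta^{-1}(1/2) \in (1-\eta, 1)$, so $v_\kindex := (\tilde x + \tilde x_\kindex)/2$ satisfies $\norm{v_\kindex}_\infty \le M$ and $\finitetop{v_\kindex} \le (1-\eta)M < 1-\eta$, placing $v_\kindex$ inside $A'_\eta(\Gamma)$; a short calculation using $Z_\eta(x_\kindex) \to \alpha$ and homogeneity of $Z_\eta$ yields $Z_\eta(v_\kindex) \to 1$.

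The crucial step is to upgrade this to $\Phi_\eta(v_\kindex) \to 1/2$; convexity already gives $\Phi_\eta(v_\kindex) \le (\Phi_\eta(\tilde x) + \Phi_\eta(\tilde x_\kindex))/2 = 1/2$. For $\kindex$ large, $v_\kindex/Z_\eta(v_\kindex)$ remains in $A'_\eta(\Gamma)$, whence $\Phi_\eta(v_\kindex/Z_\eta(v_\kindex)) = 1/2$, and I compare this with $\Phi_\eta(v_\kindex)$ coordinate by coordinate. The key is that for any fixed $\tau > 1-\eta$ the identity $\Phi_\eta(v_\kindex/Z_\eta(v_\kindex)) = 1/2$ bounds $|\{\gamma : v_\kindex(\gamma) > \tau Z_\eta(v_\kindex)\}|$ uniformly in $\kindex$ by $1/(2\phi_\eta(\tau))$, while on the complement the per-coordinate difference $\phi_\eta(v_\kindex(\gamma)/Z_\eta(v_\kindex)) - \phi_\eta(v_\kindex(\gamma))$ is governed by the fact that $\phi_\eta$ and all its derivatives vanish at $1-\eta$. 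Balancing the ``few large coordinates with uniformly bounded count'' against the ``many small coordinates each contributing negligibly'' gives $\Phi_\eta(v_\kindex/Z_\eta(v_\kindex)) - \Phi_\eta(v_\kindex) \to 0$, i.e.\ $\Phi_\eta(v_\kindex) \to 1/2$. This balancing is the main obstacle, since the total number of coordinates with $v_\kindex(\gamma) > 1-\eta$ need not be uniformly bounded in $\kindex$.

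With $\Phi_\eta(v_\kindex) \to 1/2$ in hand, the Jensen gap decomposes termwise as
\[
\tfrac12 - \Phi_\eta(v_\kindex) \;=\; \sum_{\gamma \in \Gamma} \delta_\kindex(\gamma), \qquad \delta_\kindex(\gamma) := \tfrac12\bigl(\phi_\eta(\tilde x(\gamma)) + \phi_\eta(\tilde x_\kindex(\gamma))\bigr) - \phi_\eta\!\bigl(\tfrac{\tilde x(\gamma) + \tilde x_\kindex(\gamma)}{2}\bigr),
\]
with $\delta_\kindex(\gamma) \ge 0$ by convexity of $\phi_\eta$; hence $\delta_\kindex(\gamma_0) \to 0$ for any $\gamma_0$ with $\tilde x(\gamma_0) > 1-\eta$. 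If $\tilde x_\kindex(\gamma_0) \not\to \tilde x(\gamma_0)$, extract a subsequence along which $\tilde x_\kindex(\gamma_0) \to b \neq a := \tilde x(\gamma_0)$ (possible since $0 \le \tilde x_\kindex(\gamma_0) \le M$); then $\delta_\kindex(\gamma_0) \to \tfrac12(\phi_\eta(a) + \phi_\eta(b)) - \phi_\eta(\tfrac{a+b}{2})$, which is strictly positive by strict convexity of $\phi_\eta$ on $[1-\eta,\infty)$ when $b \ge 1-\eta$, and by direct comparison with the chord from $(1-\eta,0)$ to $(a, \phi_\eta(a))$ when $b < 1-\eta$ (using $a > 1-\eta$). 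This contradicts $\delta_\kindex(\gamma_0) \to 0$, forcing $\tilde x_\kindex(\gamma_0) \to \tilde x(\gamma_0)$. Finally, $x_\kindex(\gamma_0) = Z_\eta(x_\kindex)\,\tilde x_\kindex(\gamma_0) \to Z_\eta(x)\,\tilde x(\gamma_0) = x(\gamma_0)$.
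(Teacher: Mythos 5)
Your proof starts exactly as the paper's does: apply Lemma~\ref{l:equivLUR}, normalise to $\tilde x$, $\tilde x_\kindex$ on the sphere $Z_\eta=1$, observe $\Phi_\eta(\tilde x)=\Phi_\eta(\tilde x_\kindex)=1/2$, and close the argument by a per-coordinate strict-convexity estimate for $\phi_\eta$. That last step is sound. The problem is the middle step, where you \emph{need} $\Phi_\eta(v_\kindex)\to 1/2$ for the unscaled midpoint $v_\kindex=(\tilde x+\tilde x_\kindex)/2$, and your proposed route to it does not close. You correctly deduce that $v_\kindex/Z_\kindex$ ($Z_\kindex:=Z_\eta(v_\kindex)\to 1$, $Z_\kindex\le 1$) eventually satisfies $\Phi_\eta(v_\kindex/Z_\kindex)=1/2$, but you then have to show $\Phi_\eta(v_\kindex/Z_\kindex)-\Phi_\eta(v_\kindex)\to 0$, i.e.\ a continuity of $\Phi_\eta$ under the small rescaling $v_\kindex\mapsto Z_\kindex v_\kindex$ that is \emph{uniform along the moving sequence} $v_\kindex$. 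Your two-regime ``balancing'' fails precisely in the regime you flag: for coordinates with $v_\kindex(\gamma)\in((1-\eta)Z_\kindex,\tau]$, the count is not controlled (the bound $1/(2\phi_\eta(\tau'))$ only applies once $\tau'>1-\eta$ is fixed, not as $\tau'\searrow 1-\eta$), each such coordinate contributes up to $\phi_\eta(\tau)$ to $\Phi_\eta(v_\kindex/Z_\kindex)$ while contributing $0$ to $\Phi_\eta(v_\kindex)$, and their total can a priori be of order $1$. You acknowledge this as ``the main obstacle,'' and indeed the estimate you sketch does not resolve it; nothing in the hypotheses gives the needed summability (e.g.\ $\sum_\gamma v_\kindex(\gamma)$ is generally infinite, so the mean-value-theorem bound does not survive the sum).

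The paper sidesteps this entirely by a small but decisive algebraic reshuffle. Put $\lambda_\kindex:=Z_\eta(\tilde x+\tilde x_\kindex)=2Z_\kindex\to 2$, so that $\lambda_\kindex^{-1}(\tilde x+\tilde x_\kindex)=v_\kindex/Z_\kindex$ lies on the sphere $\Phi_\eta=1/2$ (shown via subadditivity of $\finitetop{\cdot}$). Instead of comparing this point with the unscaled midpoint $v_\kindex$, the paper writes it as the convex combination
\[
\lambda_\kindex^{-1}(\tilde x+\tilde x_\kindex)
=\left(1-\lambda_\kindex^{-1}\right)\,\frac{\tilde x}{\lambda_\kindex-1}
+\lambda_\kindex^{-1}\,\tilde x_\kindex ,
\]
so that the only vector that gets rescaled is the \emph{fixed} $\tilde x$. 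Since $\lambda_\kindex-1\to 1$, $(\lambda_\kindex-1)^{-1}\tilde x\to\tilde x$ in $\norm{\cdot}_\infty$, and the mere pointwise continuity of $\Phi_\eta$ at the single point $\tilde x$ gives $\Phi_\eta\bigl((\lambda_\kindex-1)^{-1}\tilde x\bigr)\to 1/2$. The weighted Jensen gap $(1-\lambda_\kindex^{-1})\Phi_\eta\bigl((\lambda_\kindex-1)^{-1}\tilde x\bigr)+\lambda_\kindex^{-1}\Phi_\eta(\tilde x_\kindex)-\Phi_\eta\bigl(\lambda_\kindex^{-1}(\tilde x+\tilde x_\kindex)\bigr)$ then converges to $\tfrac12\cdot\tfrac12+\tfrac12\cdot\tfrac12-\tfrac12=0$ by pure arithmetic of limits, with no uniform estimate over varying vectors, and each non-negative coordinate summand tends to $0$. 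From there the paper uses a uniform modulus of strict convexity of $\phi_\eta$ over a compact range (the $(r,s,\alpha,\Delta)$ quantifier block) to read off $\abs{(\lambda_\kindex-1)^{-1}\tilde x(\gamma)-\tilde x_\kindex(\gamma)}\to 0$, whence $x_\kindex(\gamma)\to x(\gamma)$. If you want to keep your structure, replace the midpoint $v_\kindex$ by $\lambda_\kindex^{-1}(\tilde x+\tilde x_\kindex)$ and use the paper's asymmetric convex decomposition; that is exactly what makes the rescaling harmless.
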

\begin{proof}
The assumption and Lemma~\ref{l:equivLUR} yield
\begin{equation}\label{e:normToNorm}
Z_\eta(x_\kindex) \to Z_\eta(x) \quad \mbox{ and }\quad Z_\eta\left(\frac{x+x_\kindex}{2}\right)\to Z_\eta(x).
\end{equation}
Let us put $\tilde{x}:=\frac{x}{Z_\eta(x)}$ and $\tilde{x}_\kindex:=\frac{x_\kindex}{Z_\eta(x_\kindex)}$.
We get from \eqref{e:normToNorm} that
\[
2Z_\eta^2(\tilde{x})+2Z_\eta^2(\tilde{x}_\kindex)-Z_\eta^2(\tilde{x}+\tilde{x}_\kindex) \to 0.
\]
Since $Z_\eta(\tilde{x})=Z_\eta(\tilde{x}_\kindex)=1$, the above implies that
\[
\lambda_\kindex:=Z_\eta(\tilde{x}+\tilde{x}_\kindex) \to 2.
\]
We may deduce from $x, x_\kindex \in A_\eta(\Gamma)$ that $\Phi_\eta(\tilde{x})=1/2=\Phi_\eta(\tilde{x}_\kindex)$ for all $k \in  \Natural$.
Also, $\Phi_\eta(\lambda_\kindex^{-1}(\tilde{x}+\tilde{x}_\kindex))=1/2$ for all but finitely many $k \in \Natural$. Indeed, if $\Phi_\eta(\lambda_\kindex^{-1}(\tilde{x}+\tilde{x}_\kindex))\neq 1/2$, then
$\lambda_\kindex^{-1}(\tilde{x}+\tilde{x}_\kindex) \in \partial A'_\eta(\Gamma)$.
Then in fact $\finitetop{\lambda_\kindex^{-1}(\tilde{x}+\tilde{x}_\kindex)}=1-\eta$.
As $\tilde{x}\in A'_\eta(\Gamma)$, there is $\xi >0$ such that $\finitetop{\tilde{x}}+\xi<1-\eta$. By the same reasoning $\finitetop{\tilde{x}_\kindex}<1-\eta$. By the convexity (subaditivity) of $\finitetop{\cdot}$ and these estimates one has
\[
\finitetop{\tilde{x}+\tilde{x}_\kindex} \leq \finitetop{\tilde{x}}+\finitetop{\tilde{x}_\kindex}<2(1-\eta) - \xi.
\]
Finally, $\lambda_\kindex< \frac{2(1-\eta)-\xi}{1-\eta}$ which can happen only for finitely many $\kindex$ as $\lambda_\kindex \to 2$.

As $\Phi_\eta$ is continuous at $\tilde{x}$ and $\lambda_\kindex \to 2$, it follows
\[
\Phi_\eta((\lambda_\kindex-1)^{-1}\tilde{x}) \to 1/2.
\]
Consequently
\begin{equation}\label{e:scaledConvexity}
(1-\lambda_\kindex^{-1})\Phi_\eta\left((\lambda_\kindex-1)^{-1}\tilde{x}\right)+\lambda_\kindex^{-1} \Phi_\eta(\tilde{x}_\kindex)-\Phi_\eta\left(\lambda_\kindex^{-1}(\tilde{x}+\tilde{x}_\kindex)\right) \to 0.
\end{equation}
Let $a>1-\eta$. The definition of $\phi_\eta$ and a compactness argument imply that for each $\varepsilon>0$ there exists $\Delta>0$ such that if for reals $r, s, \alpha$ it holds
\begin{itemize}
\item $0\leq r \leq 4\max\set{\norm{\tilde{x}}_\infty, \sup_\kindex\norm{\tilde{x}_\kindex}_\infty}$,
\item $a\leq s \leq 4\max\set{\norm{\tilde{x}}_\infty, \sup_\kindex\norm{\tilde{x}_\kindex}_\infty}$,
\item $\frac{1}{4}\leq \alpha \leq \frac{3}{4}$, and
\item $\alpha \phi_\eta(r)+(1-\alpha)\phi_\eta(s)-\phi_\eta(\alpha r + (1-\alpha) s)<\Delta$,
\end{itemize}
then $\abs{r-s}<\varepsilon$.

In particular, let $a>1-\eta$ be such that $\set{\gamma \in \Gamma; \tilde{x}(\gamma)>1-\eta}=\set{\gamma \in \Gamma; \tilde{x}(\gamma)>a}$ and let $\gamma \in \Gamma$ be such that $\tilde{x}(\gamma)>a$. Then for $\kindex$ large enough we have $(\lambda_\kindex-1)^{-1}\tilde{x}(\gamma)>a$
so we may substitute $r:=\tilde{x}_\kindex(\gamma)$, $s:=(\lambda_\kindex-1)^{-1}\tilde{x}(\gamma)$ and $\alpha:=\lambda_\kindex^{-1}.$
It follows from \eqref{e:scaledConvexity} that one has $\abs{(\lambda_\kindex-1)^{-1}\tilde{x}(\gamma)-\tilde{x}_\kindex(\gamma)}\to 0$ as $k \to \infty$. Since $\lambda_\kindex \to 2$ and using \eqref{e:scaledConvexity}, we finally get that $x_\kindex(\gamma)\to x(\gamma)$ as $k\to \infty$.
\end{proof}

The following system of convex functions is at the heart of our construction.
We recall that $C\geq 1$ is the constant of equivalence between the norms $\abs{\cdot}$ and $\norm{\cdot}$, which was introduced in Section~\ref{s:AboutN}.
\begin{lem}\label{l:gsystem}
There exist
\begin{itemize}
\item a decreasing sequence of positive numbers $\delta_n \searrow 0$; $\delta_1<2C$;
\item a decreasing sequence of positive numbers $\rho_n \searrow 0$;
\item positive numbers $\kappa_{n,m}>0$ such that for each $n \in \Natural$ the sequence $(\kappa_{n,m})_m$ is decreasing and $\kappa_{n,m} \too{m} 0$; for each $n,m \in \Natural$ one has $\rho_n> 2 \kappa_{n,m}$;
\item an equi-Lipschitz system of non-negative, \smooth, $1$-bounded, convex functions
\[
\set{g_{n,m,l}:\domain_{n,l} \to \Real:n,m \in \Natural, l=1,\ldots, n},
\]
where $\domain_{n,l}:=[0,2nC-\delta_n (n-l)]\times[0,1+2nC]$,
satisfying (with $n,m,l \in \Natural$, $l\leq n$, resp. $l<n$ in (A2),(A5))
\begin{list}{}{}
\item{\rm (A1)} $g_{n,m,l}(t,s)=0$ iff $(t,s) \in [0,l\delta_n]\times [0,1+2nC]=:\nullset_{n,l}$;
\item{\rm (A2)} $g_{n,m,l}(t,s)\geq g_{n,m,l+1}(t,s)+\rho_n$
whenever $(t,s) \in \domain_{n,l} \setminus \nullset_{n,l+1}$;
\item{\rm (A3)} if $(t,0) \in \domain_{n,l} \setminus \nullset_{n,l}$, then $s \mapsto g_{n,m,l}(t,s)$ is increasing on $[0,1+2nC]$ and \[g_{n,m,l}(t,1+2nC)-g_{n,m,l}(t,0)\leq\kappa_{n,m};\]
\item{\rm (A4)} if $(t,0) \in \domain_{n,l}$, then $g_{n,m,l}(t,0)=g_{n,m+1,l}(t,0)$;
\item{\rm (A5)} for all $(t,s) \in \domain_{n,l}\setminus \nullset_{n,l}$ it holds $g_{n,m,l}(t,s)<g_{n,m,l+1}(t+r,s)$ provided $r>\delta_n$.
\item{\rm (A6)} Let $(t,s) \in \domain_{n,l} \setminus \nullset_{n,l}$. If $(t_\kindex,s_\kindex) \in \domain_{n,l}$ and $t_\kindex \to t$ and $g_{n,m,l}(t_\kindex,s_\kindex) \to g_{n,m,l}(t,s)$ as $\kindex \to \infty$, then $s_\kindex \to s$.
\item{\rm (A7)} The mapping $(t,s) \mapsto g_{n,m,l}(\abs{t},\abs{s})$ is strongly lattice in $\domain_{n,l}$.
\end{list}
\end{itemize}
\end{lem}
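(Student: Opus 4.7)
I would seek each $g_{n,m,l}$ in the product ansatz
\[
g_{n,m,l}(t,s) := f_{n,l}(t)\,\bigl(1 + \mu_{n,m}\,\psi_n(s)\bigr),
\]
so that $f_{n,l}$ encodes the full $l$-graded structure in the variable $t$, while the factor in $s$ is a tiny multiplicative perturbation. The parameters $\delta_n,\rho_n,\kappa_{n,m},\mu_{n,m}$ and the normalization $a_n$ will be tuned in this order; the convexity of $g_{n,m,l}$ will dictate how small $\mu_{n,m}$ must be.

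Fix $\delta_n \searrow 0$ with $\delta_1<2C$. For the $t$-factor take $\chi_n(u) := \exp(-8nC/u)$ for $u>0$ and $\chi_n(u) := 0$ otherwise; this is \smooth\ on $\Real$ with all derivatives vanishing at $0$, strictly convex and strictly increasing on $(0,\infty)$, and a direct calculation yields $\chi_n\chi_n''/(\chi_n')^2 = 1 - u/(4nC) \geq 1/2$ on $(0,2nC]$. Normalize $a_n := 1/\bigl(2\chi_n(2nC-n\delta_n)\bigr)$, set $f_{n,l}(t) := a_n\chi_n(t-l\delta_n)$ and $\psi_n(s) := (s/(1+2nC))^2$, the latter being smooth, convex, strictly increasing on $[0,1+2nC]$, vanishing at $0$ and equal to $1$ at $1+2nC$. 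Put $\rho_n := a_n\chi_n(\delta_n)$ and, if necessary, thin out $\delta_n$ to force $\rho_n \searrow 0$. Finally pick $\kappa_{n,m} \searrow 0$ in $m$ with $\kappa_{n,m} < \rho_n/2$ and $\kappa_{n,m} \leq 1/8$, and set $\mu_{n,m} := 2\kappa_{n,m}$.

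The properties (A1)--(A7) then drop out almost routinely. (A1): since $1+\mu_{n,m}\psi_n \geq 1$, $g_{n,m,l}=0 \Leftrightarrow f_{n,l}=0 \Leftrightarrow t \leq l\delta_n$. (A2): convexity and monotonicity of $\chi_n$ give $f_{n,l}(t)-f_{n,l+1}(t) = a_n\bigl[\chi_n(t-l\delta_n)-\chi_n(t-(l+1)\delta_n)\bigr] \geq a_n\chi_n(\delta_n) = \rho_n$, and $(1+\mu_{n,m}\psi_n) \geq 1$ preserves the estimate. (A3): $\psi_n$ is increasing and the $s$-variation equals $f_{n,l}(t)\mu_{n,m} \leq (1/2)(2\kappa_{n,m}) = \kappa_{n,m}$. (A4): $\psi_n(0)=0$ kills the $m$-dependence at $s=0$. (A5): strict monotonicity of $\chi_n$ on $(0,\infty)$ yields $f_{n,l+1}(t+r) > f_{n,l}(t)$ whenever $t>l\delta_n$ and $r>\delta_n$. (A6): off $\nullset_{n,l}$ one has $f_{n,l}(t)>0$, so $t_\kindex\to t$ together with the convergence of $g_{n,m,l}$ forces $\psi_n(s_\kindex)\to\psi_n(s)$ and hence $s_\kindex\to s$ by strict monotonicity of $\psi_n$. (A7): both factors are nondecreasing in the absolute value of their single argument.

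The main hurdle is joint convexity of $g_{n,m,l}$ on $\domain_{n,l}$, which I would derive from Lemma~\ref{l:sufficientConvexity} applied to $f=f_{n,l}$ and $g=1+\mu_{n,m}\psi_n$; after cancellation of $a_n^2\mu_{n,m}$ the sufficient condition reduces to
\[
\mu_{n,m}\,(\chi_n')^2(\psi_n')^2 \leq \chi_n\chi_n''\,\psi_n''\,(1+\mu_{n,m}\psi_n),
\]
which is implied by $\mu_{n,m} \leq (\chi_n\chi_n''/(\chi_n')^2)\cdot(\psi_n''/(\psi_n')^2)$. A direct check gives $\psi_n''/(\psi_n')^2 = (1+2nC)^2/(2s^2) \geq 1/2$ on $(0,1+2nC]$ (the $s=0$ case is trivial since the left side vanishes there), while $\chi_n\chi_n''/(\chi_n')^2 \geq 1/2$ on $(0,2nC]$ by construction; so any $\mu_{n,m} \leq 1/4$ guarantees convexity. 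Equi-Lipschitzness follows at once from the bound $g_{n,m,l} \leq 1$, the explicit estimate $a_n\chi_n'(u) = 8nC(a_n\chi_n(u))/u^2 = O(1/n)$ on the effective range of $u$, and the smallness of $\mu_{n,m}\psi_n'(s)$.
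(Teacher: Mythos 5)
Your proposal is correct and follows essentially the same route as the paper: a product ansatz with a one-sided $C^\infty$-flat exponential in $t$ (the paper uses $\exp(-1/t^2)$, you use $\exp(-8nC/u)$), a small convex polynomial perturbation in $s$, shifts and scales to generate the graded family, and Lemma~\ref{l:sufficientConvexity} to secure joint convexity; the differences (pure quadratic vs.\ $s^2+s+1$, multiplicative $\mu_{n,m}$ vs.\ inner scaling $\theta_{n,m}$) are cosmetic. One minor slip: $\chi_n(u)=\exp(-8nC/u)$ is strictly convex only on $(0,4nC]$ (indeed $\chi_n''<0$ beyond $4nC$), not on all of $(0,\infty)$ as stated; this is harmless since on $\domain_{n,l}$ the argument $t-l\delta_n$ stays below $2nC$, where both convexity and your bound $\chi_n\chi_n''/(\chi_n')^2\geq 1/2$ hold.
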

\begin{proof}
Let $f:\Real \to [0,+\infty)$ be defined as
\[
f(t):=
\begin{cases}
0, \mbox{ for } t\leq 0,\\
\exp(-\frac{1}{t^2}) \mbox{ for } t>0.
\end{cases}
\]
It is elementary (one may use Lemma~\ref{l:sufficientConvexity}) to check that $f(t)\cdot (s^2+s+1)$ is convex in the strip $(-\infty,10^{-1}]\times[0,10^{-1}]$
so the function
\[
g(t,s):=f(10^{-1}t)\cdot((10^{-1}s)^2+10^{-1}s+1)
\]
is convex in the strip $(-\infty,1]\times[0,1]$. We take for $(\delta_n)_n$ just any decreasing null sequence of positive
numbers such that $\delta_1<2C$, and we  define
\[
g_{n,m,l}(t,s):=g\left(\frac{t-\delta_n l}{(2C-\delta_n)n},\theta_{n,m}\frac{s}{1+2nC}\right).
\]
where $\theta_{n,m}\in (0,1)$ will be chosen later.
Now since our functions $g_{n,m,l}$ are just shifts and stretches of one non-negative, \smooth, $1$-bounded, Lipschitz, convex function, it follows that all $g_{n,m,l}$ share these properties (with the same Lipschitz constant).

Properties (A1), (A4) and (A5) are straightforward, see also Figure~\ref{f:system}.
\begin{figure}[h]\label{f:system}
\centering
\includegraphics{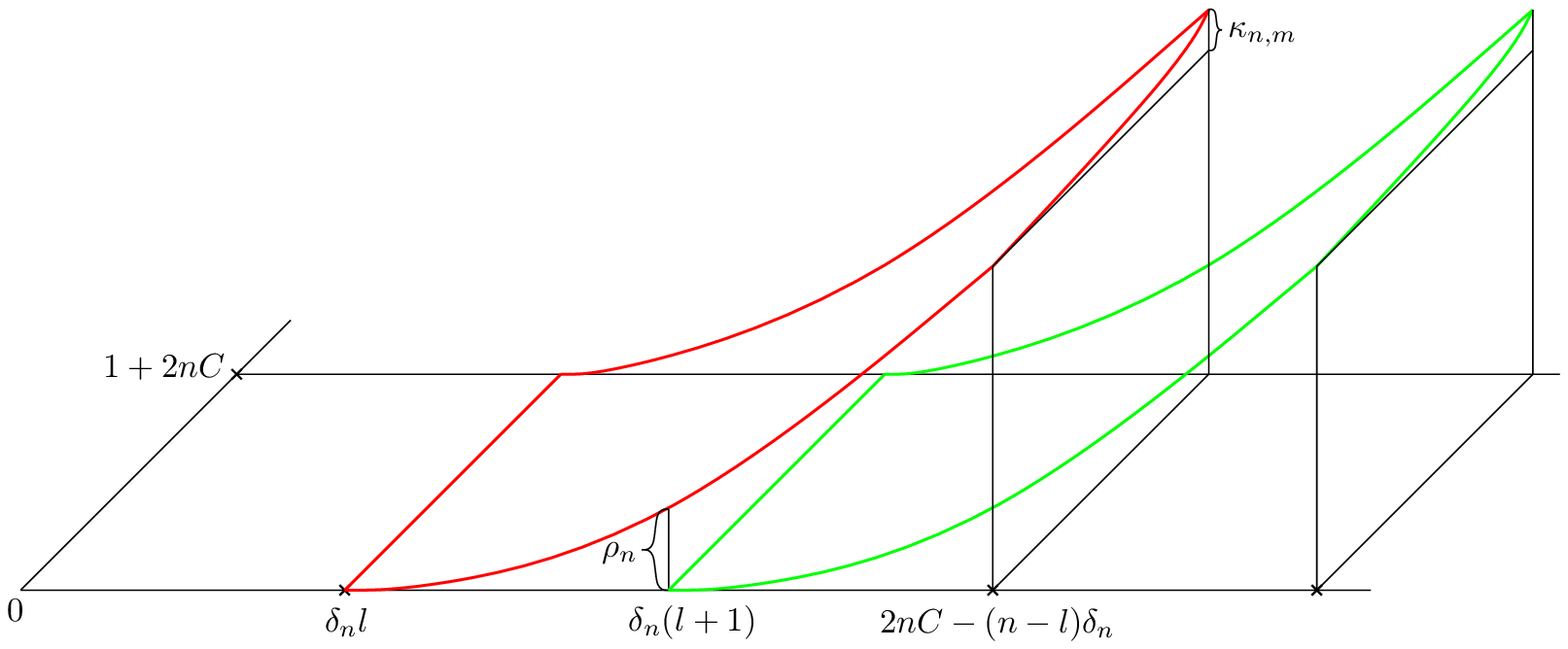}
\end{figure}
Notice that, when $t>0$, the function $s \mapsto g(t,s)$ is increasing on $[0,1]$. This implies the first part of (A3).
In order to satisfy (A2), we may define $\rho_n$ as
\[
\rho_n:=\inf\set{g_{n,m,l}(t,s)-g_{n,m,l+1}(t,s): l,m \in \Natural, l<n, (t,s) \in \domain_{n,l}\setminus \nullset_{n,l+1}}
\]
which evaluates as $\rho_n=g_{n,1,1}(2\delta_n,0)= f\left(\frac{\delta_n}{(2-\delta_n)n}\right)\searrow 0$ as $n \to \infty$. Notice that this $\rho_n$ does not depend on the choice of $\theta_{n,m}$.
On the other hand, in order to fulfill (A3), $\kappa_{n,m}$ may be defined as
\[
\kappa_{n,m}:=\sup\set{g_{n,m,l}(t,1+2nC)-g_{n,m,l}(t,0): l\leq n, (t,0) \in \domain_{n,l}}
\]
which evaluates as $\kappa_{n,m}=g_{n,m,n}(2nC,1+nC)-g_{n,m,n}(2nC,0)$. We see that, by an appropriate choice of $\theta_{n,m}$ (in particular, for each $n \in \Natural$, the sequence $(\theta_{n,m})_m$ should be decreasing to zero), one may satisfy the requirements $\rho_n>2 \kappa_{n,m}$ and $\kappa_{n,m} \searrow 0$ as $m \to \infty$.

For the proof of (A6) let us assume that $s_\kindex \nrightarrow s$. The fact that $g_{n,m,l}(t_\kindex,\cdot) \to g_{n,m,l}(t,\cdot)$ uniformly on $[0,1+2nC]$ leads quickly to a contradiction.

Finally (A7) follows since $g$ is non-decreasing in $\domain_{n,l}$ in each variable.
\end{proof}
Let us fix, for each $\delta>0$, some \smooth, convex mapping $\xi_\delta$ from $[0,+\infty)$ to $[0,+\infty)$ which satisfies $\xi_\delta([0,\delta])=\set{0}$, $\xi_\delta(t)>0$ for $t>\delta$ and $\xi_\delta(t)=t-2\delta$ for $t\geq 3\delta$. Such a mapping can be constructed e.g. by integrating twice a \smooth, non-negative bump.

\begin{lem}\label{l:systemH}
Let $n,m \in \Natural$ be fixed and let us define a mapping $H_{n,m}:\openball{(\Banach,\norm{\cdot})} \to \linf(F_n)$ where $F_n=\set{(A,B) \in 2^\ointerval\times 2^\ointerval: \abs{A} \leq n, B \subset A, A \neq \emptyset \neq B}$ by
\[
H_{n,m}x(A,B):=g_{n,m,\abs{A}}\left(\sum_{\gamma \in A} \xi_{\delta_n}(\norm{Q_\gamma x}_\gamma),\xi_{\delta_n}( \norm{P_B x-x})\right).
\]
Then $H_{n,m}$ is a continuous, coordinatewise convex and coordinatewise \COne\ mapping, and for each $x\in X$ such that $\norm{x}< 1$ it holds $H_{n,m}x \in A_{\rho_n/2-\kappa_{n,m}}(F_n)\cup \set{0}$ (see the definition of the set $A_{\rho_n/2-\kappa_{n,m}}(F_n)$ in Lemma~\ref{l:FetaDef}).
\end{lem}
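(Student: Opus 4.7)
The plan is to verify the four conclusions in order: continuity, coordinatewise convexity, coordinatewise $C^1$-smoothness, and the range condition. Abbreviate $\alpha_A(x) := \sum_{\gamma \in A}\xi_{\delta_n}(\norm{Q_\gamma x}_\gamma)$ and $\beta_B(x) := \xi_{\delta_n}(\norm{P_B x - x})$, and note that $\xi_{\delta_n}$ is \smooth, convex, $1$-Lipschitz, non-decreasing on $\Rnneg$, and vanishes on $[0, \delta_n]$. For continuity into $\linf(F_n)$, since the $\norm{\cdot}$-operator norm of each $Q_\gamma$ is at most $2C$ and that of $P_B$ at most $2\cardinality{B} C \leq 2nC$, both $\alpha_A$ and $\beta_B$ are Lipschitz with constants independent of $(A, B)$; the equi-Lipschitz property of $\set{g_{n,m,l}}$ then yields Lipschitz continuity of $H_{n,m}$ into $\linf(F_n)$. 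Coordinatewise convexity follows from Lemma~\ref{l:SLconvexComposition}: the pair $(\alpha_A, \beta_B)$ is coordinatewise convex and non-negative (as compositions of the convex non-decreasing $\xi_{\delta_n}$ with seminorms in $x$), while $g_{n,m,\cardinality{A}}$ is convex and strongly lattice by (A7). Coordinatewise $C^1$-smoothness holds because $\xi_{\delta_n}$ vanishes on $[0, \delta_n]$, smoothing out the non-differentiability of $\norm{\cdot}_\gamma$ at $\ker Q_\gamma$ (so $\xi_{\delta_n}(\norm{Q_\gamma \cdot}_\gamma)$ is $C^1$ on $X$), and similarly $\xi_{\delta_n}(\norm{P_B \cdot - \cdot})$ is \ksmooth{k} using the $C^k$-smoothness of $\norm{\cdot}$; composing with the \smooth\ function $g_{n,m,\cardinality{A}}$ preserves $C^1$-smoothness.

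For the range condition, I first check domain compatibility: $\norm{Q_\gamma x}_\gamma \leq 2C\norm{x} < 2C$ gives $\alpha_A(x) < 2\cardinality{A} C \leq 2nC - \delta_n(n - \cardinality{A})$ (using $\delta_n < 2C$), and $\beta_B(x) \leq \norm{P_B x - x} < 1 + 2nC$, so $(\alpha_A(x), \beta_B(x)) \in \domain_{n,\cardinality{A}}$. Set $\Gamma_x := \set{\gamma \in \ointerval : \norm{Q_\gamma x}_\gamma > \delta_n}$, which is finite because $(\norm{Q_\gamma x}_\gamma)_\gamma \in c_0(\ointerval)$, and let $\mathcal{G} := \set{(A, B) \in F_n : A \subset \Gamma_x}$, a finite subset of $F_n$. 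Assuming $H_{n,m}x \neq 0$, write $M := \norm{H_{n,m}x}_\infty$, $\widetilde{H}(A) := g_{n,m,\cardinality{A}}(\alpha_A(x), 0)$, and $M_1 := \sup_A \widetilde{H}(A)$. Property (A1) forces $H_{n,m}x(A, B) = 0$ whenever $\widetilde{H}(A) = 0$, while (A3) gives $H_{n,m}x(A, B) \in [\widetilde{H}(A), \widetilde{H}(A) + \kappa_{n,m}]$ whenever $\widetilde{H}(A) > 0$. Hence $M_1 \leq M \leq M_1 + \kappa_{n,m}$ and $M_1 > 0$.

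The crux is iterating (A2). For $A \not\subset \Gamma_x$ with $\widetilde{H}(A) > 0$, we have $\alpha_A(x) = \alpha_{A \cap \Gamma_x}(x)$ and $\alpha_A(x) > \cardinality{A}\delta_n$; moreover $\alpha_A(x) \leq 2\cardinality{A \cap \Gamma_x} C \leq 2nC - \delta_n(n - l)$ for every $l \in \set{\cardinality{A \cap \Gamma_x}, \ldots, \cardinality{A}-1}$, so iterating (A2) along this chain yields $\widetilde{H}(A) \leq \widetilde{H}(A \cap \Gamma_x) - (\cardinality{A} - \cardinality{A \cap \Gamma_x})\rho_n \leq M_1 - \rho_n$. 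Consequently, for $(A, B) \notin \mathcal{G}$ either $H_{n,m}x(A, B) = 0$ or $H_{n,m}x(A, B) \leq M_1 - \rho_n + \kappa_{n,m}$ (in the latter case necessarily $M_1 > \rho_n$). Setting $\eta := \rho_n/2 - \kappa_{n,m} > 0$ and using $M \leq 1$ (by $1$-boundedness of $g_{n,m,l}$), $M \geq M_1$, and $\rho_n > 2\kappa_{n,m}$, a direct computation yields
\[
(1-\eta)M - \bigl(M_1 - \rho_n + \kappa_{n,m}\bigr) \;\geq\; \rho_n - \kappa_{n,m} - \eta \;=\; \rho_n/2 \;>\; 0,
\]
handling the non-degenerate case; the degenerate case $H_{n,m}x(A, B) = 0 < (1-\eta)M$ is immediate. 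Thus $H_{n,m}x(A, B) < (1-\eta)M$ for every $(A, B) \notin \mathcal{G}$, and since $\mathcal{G}$ is finite, $\finitetop{H_{n,m}x} < (1-\eta)M$, i.e., $H_{n,m}x \in A_\eta(F_n)$. The main obstacle is the delicate interplay of (A1), (A2), (A3) and the calibration $\eta = \rho_n/2 - \kappa_{n,m}$ with $\rho_n > 2\kappa_{n,m}$: (A3) reduces the $\linf$-estimate to the $s=0$ section $\widetilde{H}$ up to an error of $\kappa_{n,m}$, (A2) forces a $\rho_n$-drop outside $\mathcal{G}$ (provided the intermediate $(t, s)$ remain in $\domain_{n,l} \setminus \nullset_{n,l+1}$), and (A1) handles vanishing; the choice of $\eta$ is tuned precisely to absorb both error sources.
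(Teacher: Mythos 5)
Your proof is correct and follows essentially the same route as the paper's: the range condition is established by comparing $H_{n,m}x$ on the finite set $\mathcal G$ (the paper's $E$, built from the finite set $\ointerval(x,\delta_n)$) against its values elsewhere, using (A1) to dispose of vanishing coordinates, (A3) to reduce to the $s=0$ section up to a $\kappa_{n,m}$ error, and (A2) to produce a $\rho_n$ drop, with the calibration $\eta_{n,m}=\rho_n/2-\kappa_{n,m}$ absorbing both errors. The only presentational difference is that you iterate (A2) explicitly through the chain $l=\cardinality{A\cap\Gamma_x},\dots,\cardinality{A}-1$ and verify the intermediate domain memberships $(t,0)\in\domain_{n,l}\setminus\nullset_{n,l+1}$, whereas the paper applies the drop from $\abs{A}$ to $\abs{C}$ in a single step (implicitly iterating); your version is slightly more careful on this point but does not change the argument.
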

Notice that, by the definition of $\kappa_{n,m}$ in Lemma~\ref{l:gsystem}, we have always $\rho_n/2-\kappa_{n,m}>0$. We will use the notation $\eta_{n,m}:=\rho_n/2-\kappa_{n,m}$.
\begin{proof}
When $\norm{x}<1$, then \eqref{e:inductiveHypo} yields $\left(\sum_{\gamma \in A} \xi_{\delta_n}(\norm{Q_\gamma x}_\gamma),\xi_{\delta_n}( \norm{P_B x-x})\right) \in [0,1+2\cardinality{A}C)\times[0,2\cardinality{A}C)\subset \domain_{n,\cardinality{A}}$. So for each $(A,B) \in F_n$ the mapping $x \mapsto H_{n,m}x(A,B)$ is \COne\ as a composition of such mappings. Also, $\set{x \mapsto H_{n,m}x(A,B):(A,B) \in F_n}$ is equi-Lipschitz thus $H_{n,m}$ is continuous. Each $x \mapsto H_{n,m}x(A,B)$ is convex by application of Lemma~\ref{l:SLconvexComposition} since $g_{n,m,l}$ is convex and strongly lattice. Because $\sup g_{n,m,l}(\domain_{n,\cardinality{A}})<1$ for each $l \leq n$, we get that $\norm{H_{n,m}x}_\infty < 1$.

We are going to prove that $\finitetop{H_{n,m}x}<\norm{H_{n,m}x}_\infty(1-\rho_n/2+\kappa_{n,m})$ or $\norm{H_{n,m}x}_\infty=0$.
For any $x \in X$ and $\delta>0$, let $\ointerval(x,\delta):=\set{\gamma \in \ointerval: \norm{Q_\gamma x}_\gamma>\delta}$.
Let $x \in \openball{(\Banach,\norm{\cdot})}$ be fixed and let us
define a set $E \subset F_n$ as $E:=\set{(A,B) \in F_n:A \subset \ointerval(x,\delta_n)}$. Since $E$ is finite, it holds
\begin{equation}\label{e:essSupH}
\finitetop{H_{n,m}x}=\finitetop{H_{n,m}x\upharpoonright_{F_n\setminus E}}\leq \sup \set{H_{n,m}x(A,B):(A,B) \in F_n \setminus E}.
\end{equation}
If there is no $(A,B) \in F_n \setminus E$ such that $H_{n,m}(A,B)>0$, then $\finitetop{H_{n,m}x}=0$ and our claim is trivially true. We proceed assuming that $H_{n,m}x(A,B)>0$ for some $(A,B) \in F_n \setminus E$. Then
\[
\left(\sum_{\gamma \in A}\xi_{\delta_n}(\norm{Q_\gamma x}_\gamma),\xi_{\delta_n}(\norm{P_B x-x})\right) \notin \nullset_{n,\abs{A}}
\]
which, by (A1) in Lemma~\ref{l:gsystem}, can happen only if $C:=A \cap \ointerval(x,\delta_n)\neq \emptyset$. Since $(A,B) \notin E$, we have $\cardinality{C}<\cardinality{A}$. It follows from Lemma~\ref{l:gsystem} (A2) and (A3) that
\[
\begin{split}
g_{n,m,\abs{A}}\left(\sum_{\gamma \in A} \xi_{\delta_n}(\norm{Q_\gamma x}_\gamma),\xi_{\delta_n}(\norm{P_B x-x})\right)
&\leq g_{n,m,\abs{C}}\left(\sum_{\gamma \in C} \xi_{\delta_n}(\norm{Q_\gamma x}_\gamma),\xi_{\delta_n}(\norm{P_B x-x})\right)-\rho_n\\
&\leq g_{n,m,\abs{C}}\left(\sum_{\gamma \in C} \xi_{\delta_n}(\norm{Q_\gamma x}_\gamma),\xi_{\delta_n}(\norm{P_D x-x})\right)-\rho_n+\kappa_{n,m}
\end{split}
\]
for any $D \subset C$. Of course, since $\ointerval(x,\delta_n)$ is finite, there are only finitely many couples $(C,D)$ such that $D\subset C \subset \ointerval(x,\delta_n)$.
We may therefore write
\[
H_{n,m}x(A,B)\leq \max_{D\subset C \subset \ointerval(x,\delta_n)} H_{n,m}x(C,D)-\rho_n+\kappa_{n,m}\leq\norm{H_{n,m}}_\infty(1-\rho_n+\kappa_{n,m})
\]
for any $(A,B) \in F_n \setminus E$.
This together with \eqref{e:essSupH} gives
$\finitetop{H_{n,m}x} < \norm{H_{n,m}}_\infty(1-(\rho_n/2-\kappa_{n,m})).$
\end{proof}

\begin{lem}\label{l:TroyanskiElement}
Let $0\neq x \in \openball{(\Banach,\norm{\cdot})}$ and let $A$ be a finite subset of $\ointerval$ such that $Q_\gamma x \neq 0$ when $\gamma \in A$.
We claim that, for all $n,m \in \Natural$ sufficiently large, there exists a finite $C_{n,m} \subset \ointerval$ such that
\begin{itemize}
 \item $A \subset C_{n,m}$, and
 \item $H_{n,m}x(C_{n,m},A)>(1-\eta_{n,m})Z_{\eta_{n,m}}(H_{n,m}x)$.
\end{itemize}
\end{lem}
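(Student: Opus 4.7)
My plan is a three-stage modification argument. First I would apply Lemma~\ref{l:FetaDef} to extract an initial witness coordinate $(A_0, B_0) \in F_n$ at which $H_{n,m}x$ strictly exceeds $(1-\eta_{n,m})Z_{\eta_{n,m}}(H_{n,m}x)$; then enlarge the first coordinate to contain $A$ using properties (A2) and (A5) of Lemma~\ref{l:gsystem}; finally swap the second coordinate to $A$ via property (A3).

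For the initial stage, I choose $n$ so large that $\abs{A} \leq n$ and $3\delta_n < \min_{\gamma \in A}\norm{Q_\gamma x}_\gamma$. Every $\gamma \in A$ then satisfies $\xi_{\delta_n}(\norm{Q_\gamma x}_\gamma) > \delta_n$, so $\sum_{\gamma \in A}\xi_{\delta_n}(\norm{Q_\gamma x}_\gamma) > \abs{A}\delta_n$ and property (A1) yields $H_{n,m}x(A, A) > 0$; in particular $H_{n,m}x \neq 0$, and by Lemma~\ref{l:systemH} it lies in $A_{\eta_{n,m}}(F_n)$. The moreover part of Lemma~\ref{l:FetaDef} then provides the desired witness $(A_0, B_0)$.

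For the modification stage, observe that any $\gamma \in A_0$ with $\norm{Q_\gamma x}_\gamma \leq \delta_n$ contributes nothing to the first argument of $g_{n,m,\abs{A_0}}$ while needlessly inflating the level index, so by (A2) removing it raises the value by at least $\rho_n$. After these removals, $A_0$ is contained in the finite set $\ointerval(x, \delta_n)$. Adjoining each $\gamma \in A \setminus A_0$ (for which $\xi_{\delta_n}(\norm{Q_\gamma x}_\gamma) > \delta_n$) then strictly increases the value via property (A5). Setting $C_{n,m} := A_0 \cup A$, which for $n$ sufficiently large satisfies $\abs{C_{n,m}} \leq n$ and manifestly contains $A$, property (A3) finally gives $H_{n,m}x(C_{n,m}, A) \geq H_{n,m}x(C_{n,m}, B) - \kappa_{n,m}$ for any nonempty $B \subset C_{n,m}$.

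The hard part will be ensuring the required strict inequality survives the potential loss of $\kappa_{n,m}$ in the last step. Whenever the modification stage effects at least one actual change of $A_0$, the gain is at least $\rho_n > 2\kappa_{n,m}$ by Lemma~\ref{l:gsystem}, which comfortably absorbs the loss. The delicate case is the degenerate one in which $A \subset A_0 \subset \ointerval(x, \delta_n)$ already, so that the middle stage becomes vacuous. To handle it I would instead start from a maximizer $(A_0, B_0)$ of $H_{n,m}x$ on $F_n$; such a maximum exists because, after the (A2) reduction, the optimization runs over finitely many first coordinates (subsets of the finite set $\ointerval(x, \delta_n)$). Property (A5) then forces this maximizer to contain $A$, and the last step gives $H_{n,m}x(C_{n,m}, A) \geq \norm{H_{n,m}x}_\infty - \kappa_{n,m}$; closing this against $(1-\eta_{n,m})Z_{\eta_{n,m}}(H_{n,m}x)$ is the main technical point, relying on a quantitative refinement of the strict inequality in Lemma~\ref{l:FetaDef}'s moreover statement together with the convergence $\kappa_{n,m} \searrow 0$ as $m \to \infty$.
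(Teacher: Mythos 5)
Your three-stage plan is a genuinely different route from the paper's. The paper does not modify a witness coordinate of $H_{n,m}x$ at fixed $(n,m)$; instead it introduces the auxiliary $m$-independent mapping $L_n y(D,E) := g_{n,1,|D|}\left(\sum_{\gamma\in D}\xi_{\delta_n}(\norm{Q_\gamma y}_\gamma),0\right)$, which is the $m\to\infty$ limit of $H_{n,m}$ (with second argument zeroed by (A3)--(A4)), solves the combinatorial inclusion problem for $L_n$, and only then transfers the strict inequality to $H_{n,m}$ using the convergence $H_{n,m}x\to L_n x$ together with the monotonicity of $Z_\eta$ in $\eta$ from Lemma~\ref{l:monotonicity}. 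Your approach is more direct but has three genuine gaps.

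First, the assertion ``whenever the modification stage effects at least one actual change of $A_0$, the gain is at least $\rho_n$'' is false for adjoins: property (A5) gives only a strict inequality $g_{n,m,l}(t,s) < g_{n,m,l+1}(t+r,s)$ with no quantitative lower bound, so a lone adjoin may gain less than $\kappa_{n,m}$, and the subsequent (A3) swap of $B_0$ to $A$ is not obviously affordable. Only removals are certified to gain $\rho_n$ by (A2).

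Second, you work with $A$ where the paper uses $A^* := \set{\gamma: \norm{Q_\gamma x}_\gamma \geq \min_{\alpha\in A}\norm{Q_\alpha x}_\alpha}$. This matters when the maximizer $A_0$ has full cardinality $n$ and one must swap an element out. To apply (A5) the gain $\xi_{\delta_n}(\norm{Q_{\gamma_1}x}_{\gamma_1})-\xi_{\delta_n}(\norm{Q_{\gamma_2}x}_{\gamma_2})$ must exceed $\delta_n$; with $\gamma_2\in A_0\setminus A$ one might have $\norm{Q_{\gamma_2}x}_{\gamma_2}$ arbitrarily close to $\min_{\gamma\in A}\norm{Q_\gamma x}_\gamma$, so this fails. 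The $A^*$ enlargement creates the uniform gap $b := \min_{\gamma\in A^*}\norm{Q_\gamma x}_\gamma - \max_{\gamma\notin A^*}\norm{Q_\gamma x}_\gamma > 0$ (positive by the $c_0$-decay of $(\norm{Q_\gamma x}_\gamma)_\gamma$), which is exactly what makes the swap legitimate for $\delta_n$ small. You should also verify $|C_{n,m}|\leq n$: adjoining without a swap can overshoot the cardinality cap.

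Third, and most importantly, the point you flag as ``the main technical point'' is in fact the whole difficulty of the lemma, and your sketch does not resolve it. You want $\norm{H_{n,m}x}_\infty - \kappa_{n,m} > (1-\eta_{n,m})Z_{\eta_{n,m}}(H_{n,m}x)$, but as $m$ varies both $H_{n,m}x$ and $\eta_{n,m}$ vary, so the strict gap $\norm{H_{n,m}x}_\infty - (1-\eta_{n,m})Z_{\eta_{n,m}}(H_{n,m}x) > 0$ furnished by Lemma~\ref{l:FetaDef} is $m$-dependent and could a priori shrink to $0$ as fast as $\kappa_{n,m}$ does. The paper's $L_n$ detour is precisely the device that fixes this: one establishes the strict inequality for the single object $L_n x$ at level $\rho_n/2$, picks $p$ with $L_n x(C_n,A) > (1-\eta_{n,p})Z_{\rho_n/2}(L_n x)$, and then uses the openness of the set $M_{(C_n,A)}$ together with $H_{n,m}x\to L_n x$ and $\eta_{n,m}\nearrow\rho_n/2$ (via Lemma~\ref{l:monotonicity}) to pass to $H_{n,m}$ for large $m$. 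Without some such decoupling of the gap from $m$, your argument does not close.
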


\begin{proof}
We start by defining $A^*:=\set{\gamma \in \ointerval: \norm{Q_\gamma x}_\gamma \geq \min_{\alpha \in A} \norm{Q_\alpha x}_\alpha}$ and we set out for finding  $C_{n,m}$ so that in fact $A^* \subset C_{n,m}$.

Let us investigate the mapping $L_n:\openball{(X,\norm{\cdot})} \to \linf(F_n)$ defined as
\[
L_n y(D,E):=g_{n,1,\cardinality{D}}\left(\sum_{\gamma \in D} \xi_{\delta_n}(\norm{Q_\gamma y}_\gamma),0\right).
\]
By the same argument as in the proof of Lemma~\ref{l:systemH}, we get that
$\finitetop{L_n x}\leq (1-\rho_n) \norm{L_n x}_\infty$ or $L_n x=0$. Hence $L_n x \in A_{\rho_n/2} \cup \set{0}$.
 If $n$ is large enough, necessarily $L_n x \neq 0$. It follows that $L_n x$ attains a nonzero maximum.
For $n \in \Natural$, let $C_{n}$ be such that $L_{n}x(C_{n},D)=\norm{L_{n}x}_\infty$ for some (and all) non-empty $D \subset C_{n}$. We claim that, for $n$ sufficiently large, $A^* \subset C_{n}$.

Let us denote $b:=\min\set{\norm{Q_\gamma x}_\gamma: \gamma \in A^*} - \max\set{\norm{Q_\gamma x}_\gamma: \gamma \in \ointerval \setminus A^*}$. Since $Q_\gamma x \neq 0$ for all $\gamma \in A$, and for the $c_0$-nature of $(\norm{Q_\gamma x}_\gamma)_{\gamma \in \ointerval}$, it follows that $b>0$.
Notice that
\[
b_n:=\xi_{\delta_n}\left(\min\set{\norm{Q_\gamma x}_\gamma: \gamma \in A^*}\right) - \xi_{\delta_n}\left(\max\set{\norm{Q_\gamma x}_\gamma: \gamma \in \ointerval \setminus A^*}\right) \to b \mbox{ as } n \to \infty.
\]
Let $n\geq \cardinality{A^*}$ be so large that $\delta_n < \xi_{\delta_n}\left(\min\set{\norm{Q_\gamma x}_\gamma: \gamma \in A^*}\right)$ and $\delta_n < b_n$.

If $A^* \nsubseteq C_{n}$, there exists $\gamma_1 \in A^* \setminus C_{n}$. If $\cardinality{C_{n}}<n$, then we define $\tilde{C}_{n}:=\set{\gamma_1} \cup C_{n}$.
By our choice of $n$, we have that $\xi_{\delta_n}(\norm{Q_{\gamma_1}x}_{\gamma_1})>\delta_n$ and so by the property (A5) in Lemma~\ref{l:gsystem} we get that
\[
g_{n,1,\cardinality{C_{n}}}\left(\sum_{\gamma \in C_{n}} \xi_{\delta_n}( \norm{Q_\gamma x}_\gamma),0\right) < g_{n,1,\cardinality{\tilde{C}_{n}}}\left(\sum_{\gamma \in \tilde{C}_n} \xi_{\delta_n}( \norm{Q_\gamma x}_\gamma),0\right)
\]
contradicting that any couple $(C_{n},D)\in F_n$ maximizes $L_{n}x$.

If $\cardinality{C_{n}}=n$, then there exists $\gamma_2 \in C_{n} \setminus A^*$ and we define $\tilde{C}_{n}:=\set{\gamma_1}\cup C_{n}\setminus {\gamma_2}$. Our choice of $n$ yields that $\xi_{\delta_n}(\norm{Q_{\gamma_1}x}_{\gamma_1})-\xi_{\delta_n} (\norm{Q_{\gamma_2}x}_{\gamma_2})>\delta_n$ so (A5) in Lemma~\ref{l:gsystem} implies
\[
g_{n,1,n}\left(\sum_{\gamma \in C_{n}} \xi_{\delta_n}( \norm{Q_\gamma x}_\gamma),0\right) < g_{n,1,n}\left(\sum_{\gamma \in \tilde{C}_{n}} \xi_{\delta_n}( \norm{Q_\gamma x}_\gamma),0\right)\\
\]
once again contradicting that any couple $(C_{n},D)\in F_n$ maximizes $L_{n}x$. So $A^* \subset C_n$.

At this moment, we leave $n$ fixed according to the choices above and we start tuning $m$. First of all, let us observe that $L_n x(C_n,A)>Z_{\rho_n/2}(L_n x)(1-\rho_n/2)$ by the moreover part of Lemma~\ref{l:FetaDef}. Since $\eta_{n,m} \nearrow \rho_n/2$ as $m \to \infty$, we deduce that there is some $p\in \Natural$ such that $L_n x(C_n,A)>Z_{\rho_n/2}(L_n x)(1-\eta_{n,p})$.
We will work, for $\gamma \in F_n$, with the set $M_\gamma=\set{u \in \linf(F_n): \abs{u(\gamma)} > Z_{\rho_n/2}(u)(1-\eta_{n,p})}$. The set $M_\gamma$ is open and, in particular, $L_n x \in M_{(C_n,A)}$.

Using (A3) and (A4) in Lemma~\ref{l:gsystem} we may see that $H_{n,m}x \to L_n x$ in $(\linf(F_n),\norm{\cdot}_\infty)$ as $m \to \infty$. Since $L_n x$ is a member of the open set $A_{\rho_n/2}(F_n)$, so will be $H_{n,m}x$ for $m$ large enough. Similarly, the openness of $M_{(C_n,A)}$ insures that  $H_{n,m}x \in M_{(C_n,A)}$ for $m\geq p$ and large enough. This means that
\[
H_{n,m}x(C_n,A) >Z_{\rho_n/2}(H_{n,m}x)(1-\eta_{n,p})
\geq Z_{\eta_{n,m}}(H_{n,m}x)(1-\eta_{n,m})
\]
where the second inequality follows from Lemma~\ref{l:monotonicity} as $\rho_n/2 \geq \eta_{n,m}$ and $\eta_{n,m}\geq \eta_{n,p}$ for all $m \geq p$.
So we may define $C_{n,m}:=C_n$ for $m$ sufficiently large.
\end{proof}
We came close to the definition of the norm $J$.
First, we choose some decreasing sequence of positive numbers $\sigma_j\searrow 0$ and we define
$J_{n,m}:\openball{(\Banach,\norm{\cdot})} \to \Real$ as
\[
J_{j,n,m}(x):=\xi_{\sigma_j}(Z_{\eta_{n,m}}(H_{n,m}x)).
\]
Next, let $\tilde{J}:\openball{(\Banach,\norm{\cdot})} \to \Real$ be defined as
\[\label{e:tildeJdef}
\tilde{J}^2(x):=\norm{x}^2+\sum_{j,n,m  \in \Natural} \frac{1}{2^{j+n+m}} J_{j,n,m}^2(x)
\]
and finally let $J:\Banach \to \Real$ be defined as the Minkowski functional of $\set{x \in X: \tilde{J}(x) \leq 1/2}$.
\begin{lem}\label{l:JisEquivNorm}
The function $J$ is an equivalent norm on $\Banach$ which is \COne\ away from the origin.
\end{lem}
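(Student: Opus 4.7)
The plan is to verify that $\tilde{J}^2$ is a convex, \COne\ function on $\openball{X}$, and then deduce the conclusions for $J$ from properties of the sublevel set $K := \{x \in \openball{X} : \tilde{J}(x) \leq 1/2\}$, whose Minkowski functional is $J$. For convexity, Lemma~\ref{l:systemH} says $H_{n,m}$ is coordinatewise convex, while non-negativity of each $g_{n,m,l}$ (condition (A1) of Lemma~\ref{l:gsystem}) makes $H_{n,m}$ also coordinatewise non-negative. Since $Z_{\eta_{n,m}}$ is a convex strongly lattice seminorm by Lemma~\ref{l:FetaDef}, Lemma~\ref{l:SLconvexComposition} yields convexity of $Z_{\eta_{n,m}} \circ H_{n,m}$, and post-composition with the non-decreasing convex $\xi_{\sigma_j}$ preserves convexity and non-negativity. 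Hence each $J_{j,n,m}^2$ is convex, and so is $\tilde{J}^2$. I would also note that the inputs of $H_{n,m}$ involve only $\norm{Q_\gamma x}_\gamma$ and $\norm{P_B x - x}$, so $\tilde{J}(-x) = \tilde{J}(x)$, making $K$ symmetric.

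To conclude that $J$ is an equivalent norm, it suffices to show that $K$ is a bounded convex body with $0$ in its interior. Boundedness follows from $\tilde{J}^2(x) \geq \norm{x}^2$, which gives $K \subset \{\norm{x} \leq 1/2\}$ and thus $J(x) \geq 2\norm{x}$. For the interior, the bound $J_{j,n,m}(x) \leq (1-\eta_{n,m})^{-1} \norm{H_{n,m}x}_\infty \leq 2$ (using $\eta_{n,m} < \rho_n/2$ bounded away from $1$ and $\norm{H_{n,m}x}_\infty < 1$ from Lemma~\ref{l:systemH}) is uniform in $j,n,m$, so the series defining $\tilde{J}^2$ converges uniformly and $\tilde{J}$ is continuous on $\openball{X}$; since $\tilde{J}(0)=0$, this produces the opposite estimate $J(x) \leq c\norm{x}$.

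The central step is \COne-smoothness of $\tilde{J}^2$ on $\openball{X}$. Each $J_{j,n,m}$ is \COne\ by a dichotomy at a fixed $x_0$: if $H_{n,m}x_0 = 0$, continuity of $H_{n,m}$ and $Z_{\eta_{n,m}}$ forces $Z_{\eta_{n,m}}(H_{n,m}x) < \sigma_j$ for $x$ near $x_0$, so $J_{j,n,m}$ vanishes identically in a neighborhood of $x_0$ (recall $\xi_{\sigma_j} \equiv 0$ on $[0,\sigma_j]$); if $H_{n,m}x_0 \neq 0$, Lemma~\ref{l:systemH} places $H_{n,m}x_0$ in $A_{\eta_{n,m}}(F_n)$, where $Z_{\eta_{n,m}}$ is LFC and \smooth\ by Lemma~\ref{l:FetaDef}, so Lemma~\ref{l:composedLFC} gives $Z_{\eta_{n,m}} \circ H_{n,m}$ \COne\ near $x_0$, and post-composing with the \smooth\ $\xi_{\sigma_j}$ preserves this. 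To pass to the infinite sum I would bound Lipschitz constants: the universal Lipschitz constant of the family $g_{n,m,l}$ (furnished by the shift-and-stretch construction in Lemma~\ref{l:gsystem}), combined with $\norm{Q_\gamma} \leq 2C$, $\norm{P_B} \leq 2nC$, and the $1$-Lipschitzness of $\xi_{\delta_n}$ and $\xi_{\sigma_j}$, shows each $J_{j,n,m}$ is $O(n)$-Lipschitz; the weights $2^{-(j+n+m)}$ then dominate the derivative series, justifying termwise differentiation and giving $\tilde{J}^2 \in C^1(\openball{X})$.

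For the final step I would apply the Implicit Function Theorem to $\tilde{J}^2(x/J(x)) = 1/4$. Given $x_0 \neq 0$ and $t_0 := 1/J(x_0)$, the convex non-negative one-variable function $f(t) := \tilde{J}^2(tx_0)$ satisfies $f(0)=0$ and $f(t_0)=1/4$, so convexity forces $f'(t_0) \geq f(t_0)/t_0 > 0$; equivalently, $(\tilde{J}^2)'(t_0 x_0) \cdot x_0 \neq 0$. The equation above then locally defines a \COne\ function $J$ near $x_0$. The main obstacle is the uniform estimate on the derivative series in the previous paragraph: it forces careful tracking of Lipschitz constants through several nested compositions, whereas once it is in place the remaining Minkowski-functional and implicit-function arguments are routine.
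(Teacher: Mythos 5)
Your proof is correct and follows essentially the same route as the paper's: establish that each $J_{j,n,m}$ is $C^1$ via Lemmas~\ref{l:FetaDef},~\ref{l:systemH} and~\ref{l:composedLFC} (your dichotomy at $H_{n,m}x_0 = 0$ versus $H_{n,m}x_0 \in A_{\eta_{n,m}}(F_n)$ is exactly the paper's observation that $\xi_\sigma \circ Z_\eta$ is smooth and LFC on $A_\eta(F_n) \cup \{0\}$), use the $O(n)$-Lipschitz bounds and geometric weights to justify termwise differentiation, and then invoke the Implicit Function Theorem with the convexity-derived nondegeneracy $\tilde{J}'(x)x > 0$. You are slightly more explicit than the paper about convexity, the uniform bound $J_{j,n,m} \leq 2$, and the estimate $f'(t_0) \geq f(t_0)/t_0$, but the ideas and the dependency on the lemmas are the same.
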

\begin{proof}
%
By the Implicit Function Theorem, to see the differentiability it is sufficient to show that for each $x \in \Banach$ such that $J(x)=1$, the function $\tilde{J}$ is Fr\'echet differentiable on some neighborhood of $x$ with $\tilde{J}'(x)x\neq 0$.

First of all let us observe that, for each $\sigma>0$ and $\eta>0$, the composed function $\xi_\sigma \circ Z_\eta: \linf(F_n) \to \Real$ is \smooth\ and LFC in $A_\eta(F_n)\cup \set{0}$. Of course it is -- we know it already for points in $A_\eta(F_n)$ and clearly, there is a neighborhood $U$ of $0\in F_n$ such that $\xi_\sigma \circ Z_\eta$ is constant in $U$.

Let $x \in \Banach$ such that $J(x)=1$. Then $\norm{x}\leq 1/2$.
It follows from Lemma~\ref{l:systemH} and from Lemma~\ref{l:composedLFC} that
each $J_{j,n,m}$ is \COne\ at $x$.
Further we claim that there is a constant $K>0$ such that each $J_{j,n,m}$ is $nK$-Lipschitz. Indeed, there is a constant $K'>0$ such that $H_{n,m}$ is $(1+2nC)K'$-Lipschitz for all $n,m \in \Natural$; $Z_{\eta}$ is $2$-Lipschitz for each $0<\eta<1/2$ and $\xi_\sigma$ is $1$-Lipschitz for each $\sigma>0$.  It follows that $\tilde{J}$ is $K''$-Lipschitz for some $K''>0$.
The calculus rules lead to the conclusion that $\tilde{J}$ is Fr\'echet differentiable on a neighborhood of any $x\in \Banach$ such that $\norm{x}< 1$; then the convexity of all terms implies that $\tilde{J}'(x)x >  0$.

Finally, $2\norm{x}\leq J(x) \leq 2K'' \norm{x}$ where the second inequality follows from the $K''$-Lipschitzness of $\tilde{J}$.
\end{proof}

\begin{lem}\label{l:finalHit}
Whenever $x_\kindex,x \in \Banach$, $\kindex \in \Natural$, are such that \eqref{e:LURmain} holds,
then for each $\varepsilon>0$ there is a finite subset $A$ of $\ointerval$ such that $Q_\gamma x \neq 0$ for $\gamma \in A$, $\norm{P_A x-x}<\varepsilon$ and $\norm{P_A x_\kindex-x_\kindex}<\varepsilon$ for all $\kindex$ sufficiently large.
\end{lem}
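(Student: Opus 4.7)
The plan is to run a Troyanski-style extraction: from \eqref{e:LURmain} deduce an LUR-type convergence for $Z_{\eta_{n,m}}$ evaluated at $H_{n,m}x_\kindex$ and $H_{n,m}x$, apply Lemma~\ref{l:PWZ} at the distinguished coordinate $(C_{n,m},A)$ produced by Lemma~\ref{l:TroyanskiElement}, and then use property (A6) of Lemma~\ref{l:gsystem} to push this convergence onto the target $\norm{P_Ax_\kindex-x_\kindex}$ via the second argument of $g_{n,m,\abs{C_{n,m}}}$.

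Because $N^2$, $J^2$ and $\norm{\cdot}^2$ are convex, the three non-negative quantities $2N^2(x_\kindex)+2N^2(x)-N^2(x+x_\kindex)$, $2J^2(x_\kindex)+2J^2(x)-J^2(x+x_\kindex)$ and $2\norm{x_\kindex}^2+2\norm{x}^2-\norm{x+x_\kindex}^2$ sum to $2\newnorm{x_\kindex}^2+2\newnorm{x}^2-\newnorm{x+x_\kindex}^2\to 0$, so each of them tends to zero. Lemma~\ref{l:equivLUR} applied to the homogeneous $J$ gives $J(x_\kindex)\to J(x)$ and $J((x+x_\kindex)/2)\to J(x)$. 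The case $x=0$ is handled by $A=\emptyset$; assume $t:=J(x)>0$ and set $u:=x/t$, $u_\kindex:=x_\kindex/t$. Since $\tilde{J}(y/J(y))=1/2$ for $y\neq 0$ and $\tilde{J}$ is continuous, the convergences $J(u_\kindex)\to 1$ and $J((u+u_\kindex)/2)\to 1$ yield $\tilde{J}(u_\kindex)\to\tilde{J}(u)=\tfrac{1}{2}$ and $\tilde{J}((u+u_\kindex)/2)\to\tilde{J}(u)$; Lemma~\ref{l:equivLUR}(ii$\Rightarrow$i) then provides $\tfrac{1}{2}[\tilde{J}^2(u)+\tilde{J}^2(u_\kindex)]-\tilde{J}^2((u+u_\kindex)/2)\to 0$. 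Since $\tilde{J}^2$ is a positively weighted sum of the convex non-negative squares $J_{j,n,m}^2$ (plus $\norm{\cdot}^2$), the same expression for each individual $J_{j,n,m}$ tends to zero as well.

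Now pick a finite $A\subset\ointerval$ via Lemma~\ref{l:PRI} with $Q_\gamma x\neq 0$ on $A$ and $\norm{P_Ax-x}<\varepsilon/2$. Lemma~\ref{l:TroyanskiElement} applied to $u$ (which lies in $\openball{X}$ because $2\norm{\cdot}\leq J$) yields, for all sufficiently large $n,m$, a finite $C_{n,m}\supset A$ with $H_{n,m}u(C_{n,m},A)>(1-\eta_{n,m})Z_{\eta_{n,m}}(H_{n,m}u)$; maximality of $C_{n,m}$ combined with (A2) of Lemma~\ref{l:gsystem} forces $\norm{Q_\gamma x}_\gamma>\delta_n$ on $C_{n,m}$ (dropping a zero-contribution $\gamma$ would strictly increase the relevant $g$-value, contradicting maximality). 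Fix $n,m$ large enough so that also $t\delta_n<\varepsilon/2$, and pick $j$ large with $\sigma_j<Z_{\eta_{n,m}}(H_{n,m}u)$ so that $\xi_{\sigma_j}$ is strictly monotone at this value. The previously derived convergence for $J_{j,n,m}$ then yields $Z_{\eta_{n,m}}(H_{n,m}u_\kindex)\to Z_{\eta_{n,m}}(H_{n,m}u)$ and $Z_{\eta_{n,m}}(H_{n,m}((u+u_\kindex)/2))\to Z_{\eta_{n,m}}(H_{n,m}u)$. Coordinatewise convexity of $H_{n,m}$, the strongly lattice property of $Z_{\eta_{n,m}}$ and its seminorm convexity give the sandwich
\[
Z_{\eta_{n,m}}\!\left(H_{n,m}\!\left(\tfrac{u+u_\kindex}{2}\right)\right)\leq Z_{\eta_{n,m}}\!\left(\tfrac{H_{n,m}u+H_{n,m}u_\kindex}{2}\right)\leq \tfrac{Z_{\eta_{n,m}}(H_{n,m}u)+Z_{\eta_{n,m}}(H_{n,m}u_\kindex)}{2},
\]
which pushes the middle term to the same limit, and Lemma~\ref{l:equivLUR}(ii$\Rightarrow$iii) then delivers $2Z_{\eta_{n,m}}^2(H_{n,m}u)+2Z_{\eta_{n,m}}^2(H_{n,m}u_\kindex)-Z_{\eta_{n,m}}^2(H_{n,m}u+H_{n,m}u_\kindex)\to 0$. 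Lemma~\ref{l:PWZ} with $\eta=\eta_{n,m}$ applied at the ``big'' coordinate $(C_{n,m},A)$ then gives $H_{n,m}u_\kindex(C_{n,m},A)\to H_{n,m}u(C_{n,m},A)$. Since $Q_\gamma x\neq 0$ on $C_{n,m}$, Lemma~\ref{l:blockconvergence} ensures $\norm{Q_\gamma x_\kindex}_\gamma\to\norm{Q_\gamma x}_\gamma$ for each $\gamma\in C_{n,m}$, so the first argument of $g_{n,m,\abs{C_{n,m}}}$ converges to its limit; property (A6) therefore forces $\xi_{\delta_n}(\norm{P_Au_\kindex-u_\kindex})\to\xi_{\delta_n}(\norm{P_Au-u})$. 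Injectivity of $\xi_{\delta_n}$ above $\delta_n$ and its vanishing on $[0,\delta_n]$ yield either $\norm{P_Ax_\kindex-x_\kindex}\to\norm{P_Ax-x}<\varepsilon/2$ or $\limsup_\kindex\norm{P_Ax_\kindex-x_\kindex}\leq t\delta_n<\varepsilon/2$; in either case $\norm{P_Ax_\kindex-x_\kindex}<\varepsilon$ for $\kindex$ large.

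The main obstacle is the first step above, namely transferring LUR-type convergence from the Minkowski functional $J$ down to its defining function $\tilde{J}$ and hence to the convex summands $J_{j,n,m}^2$. The trick is to rescale onto the $J$-unit sphere where $\tilde{J}\equiv\tfrac{1}{2}$, so that the scaling gap between $J$ and $\tilde{J}$ is absorbed by a continuity argument and Lemma~\ref{l:equivLUR} can be applied; after this reduction everything else is bookkeeping through the construction of $J_{j,n,m}$.
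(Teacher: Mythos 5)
Your proposal is correct and tracks the paper's own proof very closely: both normalize onto the $J$-unit sphere so that $\tilde{J}\equiv\tfrac12$ there, transfer LUR-type convergence from $\newnorm{\cdot}$ down through $J$, $\tilde J^2$, and the convex summands $J_{j,n,m}^2$, invert $\xi_{\sigma_j}$ to land on $Z_{\eta_{n,m}}\circ H_{n,m}$, use the coordinatewise-convexity/strongly-lattice sandwich to pass to a genuine LUR condition for $Z_{\eta_{n,m}}$ at $H_{n,m}u$ and $H_{n,m}u_\kindex$, apply Lemma~\ref{l:PWZ} at the distinguished coordinate $(C_{n,m},A)$ furnished by Lemma~\ref{l:TroyanskiElement}, feed the first argument of $g_{n,m,\cdot}$ through Lemma~\ref{l:blockconvergence}, and extract the second argument with (A6). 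The one place you genuinely deviate is the endgame: the paper simply requires $\delta_n<\norm{P_Ax-x}$ (leaving the degenerate case $\norm{P_Ax-x}$ small or zero unaddressed), whereas you use the dichotomy ``either $\xi_{\delta_n}$ is strictly monotone at $\norm{P_Au-u}$ and gives convergence, or $\xi_{\delta_n}(\norm{P_Au-u})=0$ and $\limsup_\kindex\norm{P_Ax_\kindex-x_\kindex}\leq t\delta_n<\varepsilon/2$,'' which closes that gap cleanly. (Minor typo: the displayed ``$\norm{Q_\gamma x}_\gamma>\delta_n$ on $C_{n,m}$'' should read $\norm{Q_\gamma u}_\gamma>\delta_n$, since Lemma~\ref{l:TroyanskiElement} is applied to $u$; this does not affect the argument because the $c_0$-ordering is scale-invariant.)
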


\begin{proof}
We may assume, that $J(x)=1$.
We start by finding a finite $A \subset \ointerval$ such that $\norm{P_A x-x}<\varepsilon/2$ and such that $Q_\gamma x \neq 0$ for $\gamma \in A$. This is possible by Lemma~\ref{l:PRI}. Now we will just show that $\norm{P_A x_\kindex - x_\kindex} \to \norm{P_A x-x}$.

It follows from \eqref{e:LURmain} and from the uniform continuity of $\tilde{J}$ on bounded sets that
\begin{equation}\label{e:LURJ}
\frac{\tilde{J}^2(x_\kindex)+\tilde{J}^2(x)}{2}-\tilde{J}^2\left(\frac{x+x_\kindex}{2}\right) \to 0 \mbox{ as } k\to \infty.
\end{equation}
By the convexity of the terms in the definition of $\tilde{J}$, we get that
\begin{equation}\label{e:LURJsub}
\frac{J^2_{j,n,m}(x_\kindex)+J^2_{j,n,m}(x)}{2} - J^2_{j,n,m}\left(\frac{x+x_\kindex}{2}\right) \to 0 \mbox{ as } k\to \infty.
\end{equation}
for each $j,n,m \in \Natural$.

Let us borrow the notation $L_n x$ from the proof of Lemma~\ref{l:TroyanskiElement}. Let us recall that $H_{n,m}x \geq L_n x\geq 0$ (in the lattice $\linf(F_n)$) for all $m \in \Natural$.
There is some $n_0 \in \Natural$ such that for all $n\geq n_0$ we have that $L_{n}x \neq 0$. Hence
$Z_{\eta_{n,m}}(H_{n,m}x) \geq Z_{\eta_{n,m}}(L_n x) \geq Z_{\eta_{n,1}}(L_n x)>0$ for $n\geq n_n$ and $m \in \Natural$. Therefore for each $n \geq n_0$ there exists  $j_n \in \Natural$ such that for all $j \geq j_n$ and all $m \in \Natural$ one has
$J_{j,n,m}> 0$. Since, for $n\geq n_0, m \in \Natural$ and $j\geq j_n$, \eqref{e:LURJsub} is equivalent to
\[
\lim_\kindex J_{j,n,m}(x_\kindex)=J_{j,n,m}(x)= \lim_\kindex J_{j,n,m}\left(\frac{x_\kindex+x}{2}\right)
\]
and $\xi_\sigma\upharpoonright_{(\sigma,+\infty)}$ has a continuous inverse,
it follows that
\[
\frac{Z_{\eta_{n,m}}^2(H_{n,m}x_\kindex)+ Z_{\eta_{n,m}}^2(H_{n,m}x)}{2} - Z_{\eta_{n,m}}^2\left(H_{n,m}\left(\frac{x_\kindex+x}{2}\right)\right) \too{\kindex} 0
\]
for all $n\geq n_0$ and $m \in \Natural$.
Since $x \mapsto H_{n,m}x(A,B)$ is convex and non-negative for each $(A,B) \in F_n$ and since $Z_{\eta_{n,m}}$ is strongly lattice and convex it follows
\[
\begin{split}
0 \leftarrow \frac{Z_{\eta_{n,m}}^2(H_{n,m}x_\kindex)+ Z_{\eta_{n,m}}^2(H_{n,m}x)}{2} - Z_{\eta_{n,m}}^2\left(H_{n,m}\left(\frac{x_\kindex+x}{2}\right)\right) &\geq \\
\geq \frac{Z_{\eta_{n,m}}^2(H_{n,m}x_\kindex)+Z_{\eta_{n,m}}^2(H_{n,m}x)}{2} - Z_{\eta_{n,m}}^2\left(\frac{H_{n,m}x_\kindex+H_{n,m}x}{2}\right) &\geq 0
\end{split}
\]
for every $n \geq n_0$ and $m \in \Natural$.
Let us fix $n \geq n_0$ and $m \in \Natural$ both large enough in the sense of Lemma~\ref{l:TroyanskiElement}. We also require that $\delta_n <\norm{P_A x-x}$. By application of Lemma~\ref{l:TroyanskiElement}, we obtain a set $C_{n,m}$ such that $\gamma:=(C_{n,m},A)\in F_n$ satisfies the assumptions of Lemma~\ref{l:PWZ}. Thus, using this last mentioned lemma, we may conclude that $H_{n,m}x_\kindex(C_{n,m},A) \to H_{n,m}x(C_{n,m},A)$ as $k \to \infty$.

To finish the argument, we employ Lemma~\ref{l:blockconvergence} to see that
\[
\sum_{\gamma \in C_{n,m}} \xi_{\delta_n}(\norm{Q_\gamma x_\kindex}_\gamma) \to \sum_{\gamma \in C_{n,m}} \xi_{\delta_n}(\norm{Q_\gamma x}_\gamma) \mbox{ as } \kindex \to \infty
\]
and we apply Lemma~\ref{l:gsystem} (A6) on the function $g_{n,m,\cardinality{C_{n,m}}}$. This leads to $\xi_{\delta_n}(\norm{P_A x_\kindex-x_\kindex}) \to \xi_{\delta_n}(\norm{P_A x-x})$ which means that $\norm{P_A x_\kindex - x_\kindex} \to \norm{P_A x - x}$ by our choice of $n$.
\end{proof}

In the end of all we are going to show that $J$ is a limit of \ksmooth{k} norms.
A self-evident choice for the approximating norms $J_i$ is as follows.
Let us define
\[
H^i_{n,m}x(A,B):=g_{n,m,l}\left(\sum_{\gamma \in A} \xi_{\delta_n}(\norm{Q_\gamma x}_{\gamma,i}),\xi_{\delta_n}( \norm{P_B x-x})\right),
\]
\[
J_{j,n,m,i}(x):=\xi_{\sigma_j}(Z_{\eta_{n,m}}(H^i_{n,m}x)),
\]
\[
\tilde{J}^2_i(x):=\norm{x}^2+\sum_{1\leq  j,n,m  \leq i} \frac{1}{2^{j+n+m}} J_{j,n,m,i}^2(x)
\]
and $J_i$ as the Minkowski functional of $\set{x \in X: \tilde{J}_i(x) \leq 1/2}$.
As a finite sum of \ksmooth{k} functions, $\tilde{J}_i$ is \ksmooth{k}. The Implicit Function Theorem implies the same about $J_i$. Moreover $2\norm{x} \leq J_i(x) \leq 2K''\norm{x}$ as in the proof of Lemma~\ref{l:JisEquivNorm}.
Let $\varepsilon>0$ be given. We will show that there is an index $i_0 \in \Natural$ such that $\abs{\tilde{J}^2_i(x)-\tilde{J}^2(x)}<\varepsilon$ whenever $\norm{x}<1$ and $i \geq i_0$. For this it is sufficient that $\left(\frac{2C}{i_0}\right)^2<\varepsilon/2$ and
\[\label{e:similarconsiderations}
\sum_{\max\set{j,n,m} \geq i_0} \frac{2}{2^{j+n+m}}<\varepsilon/2
\]
because then, for each $i \geq i_0$,
\[
\begin{split}
\abs{\tilde{J}^2_i(x)-\tilde{J}^2(x)}&\leq \sum_{1\leq  j,n,m  \leq i} \frac{1}{2^{j+n+m}} (J_{j,n,m,i}^2(x)-J_{j,n,m}^2(x)) + \sum_{\max\set{j,n,m} \geq i_0} \frac{1}{2^{j+n+m}}J_{j,n,m}^2(x)\\
&< \sum_{1\leq  j,n,m  \leq i} \frac{1}{2^{j+n+m}} \left(\frac{2C i}{i^2}\right)^2+\varepsilon/2 < \varepsilon
\end{split}
\]
where in the second inequality we are using \eqref{e:inductiveHypo} and $(1-\frac{1}{i^2})\norm{x}_\gamma \leq \norm{x}_{\gamma,i}\leq \norm{x}_\gamma$ to estimate the first term and $J_{j,n,m}(x) \leq 2$ for $\norm{x} < 1$.
This proves that $\tilde{J}_i \to \tilde{J}$ uniformly on $\openball{(X,\norm{\cdot})}$.

Now let us observe that, since $\tilde{J}(0)=0$, we have the estimate
\begin{equation}\label{e:MinkEstimate}
\frac{1}{2}\abs{\lambda-1} \leq \abs{\frac{1}{2}-\tilde{J}(\lambda x)}
\end{equation}
for all $x \in X$ such that $\tilde{J}(x)=\frac{1}{2}$, or equivalently such that $J(x)=1$.

We assume that there is a sequence $(x_i) \subset \openball{(X,\norm{\cdot})}$ such that $J_i(x_i) - J(x_i) \nrightarrow 0$. Let $c_i>0$, resp. $d_i>0$, be such that $J(c_i x_i)=1$, resp. $J_i(d_i x_i)=1$. It follows that $\lambda_i:=\frac{d_i}{c_i} \nrightarrow 1$ so we may and do assume that there is some $\varepsilon>0$ such that $\abs{\lambda_i-1}>2\varepsilon$ for all $i \in \Natural$.
On the other hand, since $\norm{d_i x_i} \leq \frac{1}{2}$ and since $\tilde{J}_i \to \tilde{J}$ uniformly on $\openball{(X,\norm{\cdot})}$, we get that $\abs{\tilde{J}(\lambda_i c_i x_i)-\frac{1}{2}}=\abs{\tilde{J}(\lambda_i c_i x_i)-\tilde{J}_i(d_i x_i)}\leq \varepsilon$ for $i$ large enough. Thus, having in mind \eqref{e:MinkEstimate}, we obtain $\abs{\lambda_i-1}\leq 2\varepsilon$. As a result of this contradiction we see immediatelly that $J_i \to J$ uniformly on bounded sets.

\end{document}